\def\dfrac{\displaystyle\frac}
\def\dsum{\displaystyle\sum}
\def\dmax{\displaystyle\max}
\newtheorem{prop}{Proposition}
\newtheorem{theo}[prop]{Theorem}
\newtheorem{lemm}[prop]{Lemma}
\newtheorem{coro}[prop]{Corollary}
\newtheorem{defi}[prop]{Definition}
\newcommand{\pa}{\partial}
\newcommand{\al}{\alpha}
\newcommand{\abc}[1]{\left( #1 \right)}
\newcommand{\abz}[1]{\left[ #1 \right]}
\renewcommand{\leq}{\leqslant}
\renewcommand{\geq}{\geqslant}
\newcommand{\p}{\partial}
\newcommand{\la}{\lambda}
\numberwithin{equation}{section}
\title{An interior estimate  for convex solutions and a rigidity theorem}
\author{Ming Li}
\address{Institute of Mathematics \\ Fudan University \\  Shanghai, China}
\email{leemingfudan@gmail.com}
\author{Changyu Ren}
\address{School of Mathematical Science  \\ Jilin University \\ Changchun, China} \email{rency@jlu.edu.cn}
\author{Zhizhang Wang}
\address{Institute of Mathematics \\ Fudan University \\ Shanghai, China}
\email{zzwang@fudan.edu.cn}
\thanks{Research of the last author is supported  by an NSFC Grant No.11301087}
\begin{document}
\begin{abstract}
We establish an interior $C^2$  estimate for $k+1$ convex solutions to Dirichlet problems of $k$-Hessian equations.  We also use such estimate to obtain a rigidity theorem for $k+1$ convex  entire solutions of $k$-Hessian equations in Euclidean space.
\end{abstract}

\subjclass{53C23, 35J60, 53C42}

\maketitle

\section{introduction}
In this paper, we consider an interior $C^2$ estimate for the following Dirichlet problem for $k$-Hessian equations,
\begin{align}\label{1.1}
\left\{\begin{matrix}\sigma_k(D^2u)&=&f(x,u,\nabla u), &~~~{\rm in}~ \Omega, \\
u&=&0, &~~~{\rm on}~ \p\Omega.\end{matrix}\right.
\end{align}
Here, $u$ is a function defined in some domain $\Omega$. $\nabla u$ is the gradient of  $u$ and $D^2 u$ is the Hessian of $u$. We also require $f>0$ and smooth enough respect to every variables.

The interior $C^2$ estimates for Monge-Amp\`ere equations were
studied at first by A.V. Pogorelov \cite{P3}, \cite{GT}. Then, K.S.
Chou and X.-J. Wang extended Pogorelov's estimates to the case of
$k$-Hessian equations of \cite{CW}, \cite{W}. Explicitly, in their
paper,  for any function $f$ not depending $\nabla u$ in
\eqref{1.1}, they have proved that, for any  small positive constant
$\varepsilon$,  the following estimates hold,
\begin{eqnarray}\label{1.2}
(-u)^{1+\varepsilon}\Delta u&\leq &C.
\end{eqnarray}
Here, constant $C$ depends on the domain $\Omega$, $k$, $f$  and $\sup_{\Omega}|\nabla u|$.

Maybe a  natural question is whether these interior  estimates are
still valid for that $f$ does depend on the gradient term $\nabla
u$, namely, interior estimates for \eqref{1.1}. For the $2$-Hessian
equation, we can get this type of  interior estimates.
\begin{theo}\label{theo1}
For  2-Hessian equations, i.e. $k=2$ in \eqref{1.1}, there is some
constant $\beta>0$, such that
\begin{eqnarray}  \label{1.3}
(-u)^{\beta}\Delta u&\leq &C.
\end{eqnarray}
Here positive constants $\beta$ and $C$ depend on the domain $\Omega$, the function $f$, $\sup_{\Omega}|u|$ and $\sup_{\Omega}|\nabla u|$.
\end{theo}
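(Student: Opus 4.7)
The plan is to apply the maximum principle to a Pogorelov-type auxiliary function that combines the largest eigenvalue of $D^2u$, a weight vanishing on $\p\Omega$, and a gradient term designed to absorb the $p$-dependence of $f$. A natural candidate is
$$W(x,\xi)=\beta\log(-u)+\log u_{\xi\xi}+\frac{\alpha}{2}|\nabla u|^2,$$
where $\xi\in\mathbb S^{n-1}$ and $\alpha,\beta>0$ are constants to be fixed. Assuming $W$ attains its maximum on $\bar\Omega\times\mathbb S^{n-1}$ at an interior point $x_0$ with direction, after rotation, $e_1$, we diagonalize $D^2u(x_0)=\mathrm{diag}(\lambda_1,\dots,\lambda_n)$ with $u_{11}=\lambda_1$. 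Multiplicity of $\lambda_1$ is handled by the standard perturbation of the test function.

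At $x_0$ the critical-point condition $W_i=0$ yields
$$\frac{u_{11i}}{u_{11}}=\beta\,\frac{u_i}{-u}-\alpha\,u_i\lambda_i,$$
which controls the third-order quantities $u_{11i}$ in terms of geometric data. Differentiating $\sigma_2(D^2u)=f(x,u,\nabla u)$ once in $e_1$ gives $\sigma_2^{ij}u_{ij1}=f_{x_1}+f_u u_1+f_{p_\ell}u_{\ell 1}$, and twice in $e_1$ produces a second-order identity whose only genuinely new term, compared with the Chou--Wang case \eqref{1.2}, is $f_{p_\ell}u_{\ell 11}$. Expanding the inequality $L(W):=\sigma_2^{ij}W_{ij}\leq 0$ and using the concavity of $\sigma_2^{1/2}$, where $k=2$ enters essentially, to retain a positive portion of $-\sigma_2^{ij,kl}u_{ij1}u_{kl1}$, leads to an inequality in which three positive reserves compete with the gradient-dependence error: (i) a third-order reserve $\sigma_2^{ii}(u_{11i}/u_{11})^2$ from $\log u_{11}$, (ii) a reserve $\alpha\sum_i\sigma_2^{ii}u_{ii}^2$ from the gradient weight, and (iii) a reserve $\beta\sigma_2^{ii}u_i^2/u^2$ from $\log(-u)$.

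The main obstacle is the absorption of $f_{p_\ell}u_{\ell 11}/u_{11}$. Substituting the critical-point identity splits this quantity into contributions linear in $\lambda_\ell$ and in $1/(-u)$; the first is absorbed by reserve (ii), using the Newton identity $\sum_i\sigma_2^{ii}u_{ii}^2=\sigma_1(D^2u)\,f-3\sigma_3(D^2u)$ to convert it into a quantity that grows suitably in $\lambda_1$, while the second is absorbed by reserve (iii) via Cauchy--Schwarz at the price of keeping $\beta$ small. Choosing $\alpha$ large first and $\beta>0$ small second, then tracking the resulting powers of $-u$, should close the argument and produce $(-u)^\beta\Delta u\leq C$. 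The restriction to $k=2$ reflects the use of $\sigma_2^{1/2}$-concavity; extending the argument to general $k$ would require a substitute for this step.
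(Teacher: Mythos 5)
Your test function $W=\beta\log(-u)+\log u_{\xi\xi}+\frac{\alpha}{2}|\nabla u|^2$ is close to what the paper actually uses, but it omits a term $\frac{a}{2}|x|^2$ (with $a$ a large free parameter), and this omission matters. Differentiating $\sigma_2(D^2u)=f(x,u,\nabla u)$ twice produces, after dividing by $u_{11}$, a term of order $-Cu_{11}$ with $C$ controlled by second derivatives of $f$; the paper absorbs it by the tunable positivity $(n-1)a\sigma_1\geq au_{11}$ coming from the $|x|^2$ weight. Your substitute is the gradient reserve $\alpha\sum_i\sigma_2^{ii}u_{ii}^2$. But $\alpha$ cannot be taken large: the very same critical-point identity you quote forces $\alpha\lesssim 1/\sup|\nabla u|^2$, since $-\sigma_2^{11}u_{111}^2/u_{11}^2$ contains a cost $-2\alpha^2\sigma_2^{11}u_1^2\lambda_1^2$ that must be dominated by the reserve $\alpha\sigma_2^{11}\lambda_1^2$. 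With $\alpha$ capped and $c_0$ (in $u_{11}\sigma_2^{11}\geq c_0$) fixed by $f$, there is no mechanism to make $\alpha c_0$ beat the given $C$. The $|x|^2$ term is precisely the extra free parameter that closes this; it is also the source of the domain dependence in the statement.

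Two further issues. First, you misdescribe the structure of the inequality: from $\sigma_2^{ij}W_{ij}\leq 0$, the contributions of $\log u_{\xi\xi}$ and $\beta\log(-u)$ are $-\sigma_2^{ii}u_{11i}^2/u_{11}^2$ and $-\beta\sigma_2^{ii}u_i^2/u^2$, both \emph{negative}; these are costs to be absorbed, not reserves (i) and (iii). The genuine third-order reserve comes from the off-diagonal part $\sigma_2^{pq,qp}=-1$, i.e. $\sum_{p\neq q}u_{pq1}^2\geq 2\sum_{i\neq 1}u_{11i}^2$, not from $\log u_{\xi\xi}$. Second, for the $f_{p_\ell}u_{\ell 11}/u_{11}$ term you propose a Newton-identity absorption, but there is an exact cancellation available and the paper uses it: when you contract the full second-derivative inequality with $\sigma_2^{ii}$, the weight $\frac{\alpha}{2}|\nabla u|^2$ contributes $\alpha\sum_k u_k\sigma_2^{ii}u_{kii}$, which by the once-differentiated equation $\sigma_2^{ii}u_{iik}=f_k+f_uu_k+f_{p_k}u_{kk}$ cancels the $-\alpha f_{p_k}u_ku_{kk}$ coming from substituting the critical-point identity into $f_{p_k}u_{k11}/u_{11}$. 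This cancellation is clean and avoids the delicate constant-chasing your Newton-identity route would require. Finally, the paper's argument splits into the cases $\sum_{i=2}^{n-1}\lambda_i\lessgtr\lambda_1/3$ (and further sub-cases on the size of $\sigma_2^{22}$) to decide which positive term wins; without some such dichotomy the final absorption does not go through in all regimes of the eigenvalue distribution, and your proposal does not mention it.
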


By some reasons, the small constant $\varepsilon$ should not be zero
in Chou-Wang's  proof. On the other hand, for Monge-Amp\`ere
equation case, namely, $k=n$ in \eqref{1.1}, we can drop the small
$\varepsilon$ \cite{P3}, \cite{GT}. It reminds us that if the
convexity is better, estimate \eqref{1.2} can be improved. Using
techniques developing in \cite{GRW},  we can get the following
theorem,

\begin{theo}\label{theo2}
Suppose that function $u$ is a $k+1$ convex solution for  the Dirichlet
problem of k-Hessian equations \eqref{1.1}. Namely, function $u$ is
in $k+1$ convex cone. We have, \begin{eqnarray} (-u)\Delta u&\leq
&C.
\end{eqnarray}
Here, positive constant $C$ depends on $\sup_{\Omega}|\nabla u|$, $\sup_{\Omega}|u|$, the function $f$ and the domain $\Omega$.
\end{theo}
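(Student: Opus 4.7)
The strategy is a Pogorelov-type maximum principle applied to the auxiliary function
\begin{align*}
\Phi(x) \;=\; \log(-u(x)) \;+\; \log \lambda_1(D^2u(x)) \;+\; \tfrac{\alpha}{2}|\nabla u(x)|^2,
\end{align*}
where $\lambda_1$ denotes the largest eigenvalue of $D^2u$ and $\alpha>0$ is a small parameter to be chosen. The first summand is the classical Pogorelov weight producing the $(-u)$ factor, the last summand absorbs the new gradient-dependent terms coming from $f(x,u,\nabla u)$, and the middle summand tracks the \emph{largest} eigenvalue rather than the trace $\Delta u$ — this refinement is what is expected to remove the spurious factor $(-u)^{\varepsilon}$ present in \eqref{1.2}. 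Since $\lambda_1$ need not be smooth when its multiplicity exceeds one, I would either perturb $D^2u$ by a fixed small symmetric matrix with simple spectrum, or work with the smooth surrogate $\tfrac{1}{m}\log\operatorname{tr}((D^2u)^m)$ and let $m\to\infty$ at the end.

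Assume $\Phi$ attains its interior maximum at a point $x_0\in\Omega$, rotate coordinates so that $D^2u(x_0)$ is diagonal with $\lambda_1\geq\cdots\geq\lambda_n$, and set $L:=\sigma_k^{ij}\partial_i\partial_j$. The vanishing of $\nabla\Phi$ at $x_0$ yields
\begin{align*}
\frac{u_{11i}}{\lambda_1}\;=\;\frac{u_i}{u}\;-\;\alpha \lambda_i u_i,
\end{align*}
controlling the on-diagonal third derivatives, while the standard identities give
\begin{align*}
L\log(-u)\;=\;-\frac{k\sigma_k}{u}-\frac{\sigma_k^{ii}u_i^2}{u^2},\qquad \tfrac{\alpha}{2}L|\nabla u|^2 \;=\; \alpha\sigma_k^{ii}\lambda_i^2 \;+\; (\text{controlled $f$-terms}),
\end{align*}
and, after differentiating $\sigma_k(D^2u)=f$ twice in direction $x_1$,
\begin{align*}
L\log\lambda_1 \;=\; \frac{f_{11}-\sigma_k^{pq,rs}u_{pq1}u_{rs1}}{\lambda_1}\;-\;\frac{\sigma_k^{ii}u_{11i}^2}{\lambda_1^2}\;+\;\frac{2}{\lambda_1}\sum_{p>1}\sum_i \frac{\sigma_k^{ii}u_{1pi}^2}{\lambda_1-\lambda_p}
\end{align*}
modulo lower-order gradient-dependent corrections.

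The main obstacle, and the place where $(k+1)$-convexity must enter, is to absorb the negative concavity residual $-\sigma_k^{pq,rs}u_{pq1}u_{rs1}/\lambda_1$ together with $-\sigma_k^{ii}u_{11i}^2/\lambda_1^2$ into the nonnegative commutator quadratic $\frac{2}{\lambda_1}\sum_{p>1}\sum_i\sigma_k^{ii}u_{1pi}^2/(\lambda_1-\lambda_p)$ and into the positive term $-k\sigma_k/u$. Following the approach of \cite{GRW}, I would split according to the gap between $\lambda_1$ and the remaining eigenvalues: in the balanced regime where $\lambda_n$ is comparable to $\lambda_1$ up to a fixed fraction, $\alpha\sigma_k^{ii}\lambda_i^2$ already dominates every bad contribution; in the opposite regime $\lambda_1\gg|\lambda_i|$ for $i\geq 2$, one invokes the sharpened structural and concavity inequalities for $\sigma_k$ on the $(k+1)$-convex cone $\Gamma_{k+1}$, which provide the extra margin needed to defeat the concavity residual — precisely the slack that is unavailable in the merely $k$-convex setting and is responsible for the weaker exponent $1+\varepsilon$ in \cite{CW}.

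Combining these steps with the first-order relation to eliminate $u_{11i}$, using $|u|+|\nabla u|\leq C$ to control the gradient-dependent $f$-terms, and choosing $\alpha$ small enough that $\alpha\sigma_k^{ii}\lambda_i^2$ is dominated by $-k\sigma_k/u$, the inequality $L\Phi(x_0)\leq 0$ reduces to an algebraic inequality in $\lambda_1(x_0)$ and $(-u(x_0))$ forcing $\lambda_1(x_0)(-u(x_0))\leq C$. By maximality of $\Phi$ this bound propagates to the whole domain, yielding $(-u)\lambda_1\leq C$ on $\Omega$ and hence $(-u)\Delta u\leq nC$, as claimed.
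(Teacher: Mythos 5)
Your outline captures the correct general shape of a Pogorelov-type argument — a $\log(-u)$ weight, a gradient weight to absorb the $\nabla u$-dependence of $f$, a largest-eigenvalue surrogate, and a case split on eigenvalue gaps — and this is indeed what the paper does. But the places where you wave at ``sharpened structural and concavity inequalities on $\Gamma_{k+1}$'' and ``let $m\to\infty$ at the end'' are exactly where the proof actually lives, and as written the proposal has gaps there.

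First, the $m$-polynomial is not just a smoothing device. The paper uses $P_m=\sum_j\kappa_j^m$ with $\kappa_j=\lambda_j+K_0$, where $K_0$ is produced by a preliminary lemma (Lemma~\ref{lemm 7}) stating that for a $k+1$-convex solution $D^2u+K_0I\geq 0$; this is the \emph{first} and essential place the extra convexity enters. Your candidate $\mathrm{tr}((D^2u)^m)$ would be $\sum\lambda_j^m$, which for $m$ even can be dominated by a very negative $\lambda_n$ and so is not a proxy for $\lambda_1$; for $m$ odd it can be negative. The $K_0$-shift forces every $\kappa_j\geq 0$ and in turn makes the second-derivative structure of $\log P_m$ workable: the quotient $(\kappa_p^{m-1}-\kappa_q^{m-1})/(\kappa_p-\kappa_q)$ becomes a positive sum $\sum_{l=0}^{m-2}\kappa_p^{m-2-l}\kappa_q^l$, and the identity
\begin{equation*}
\kappa_j\sigma_k^{jj,ii}+\sigma_k^{jj}=(K_0+\lambda_i)\sigma_k^{jj,ii}+\sigma_k^{ii}\geq\sigma_k^{ii}
\end{equation*}
(the paper's (3.10)) is what ties these positive pieces to $\sigma_k^{ii}$. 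None of this survives $m\to\infty$: the paper fixes $m=\max\{10,k+11\}$ and needs finite-$m$ arithmetic such as $(m+8)(m-3)\geq(m+1)^2$ and the factor $(1+1/m)$ multiplying the $E_i$ term; the $\log\lambda_1$ limit loses the extra positive pieces $(m-1)\kappa_j^{m-2}u_{jji}^2$ on which Lemma~\ref{lemma8} relies.

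Second, the dichotomy you propose is not the right one. The balanced regime you want is not $\lambda_n\sim\lambda_1$ (this would make $u$ essentially convex, far too strong) but $\lambda_k\geq\delta_k\lambda_1$, and in that case the estimate does not come from $\alpha\sigma_k^{ii}\lambda_i^2$ at all: the \emph{equation itself} gives $f=\sigma_k\geq\lambda_1\cdots\lambda_k\geq\delta_k^{k-1}\lambda_1^k$, hence $\lambda_1\leq C$, end of story. The hard case is when a gap opens up at some intermediate level $r\in\{1,\dots,k-1\}$: $\lambda_r\geq\delta_r\lambda_1$ but $\lambda_{r+1}\leq\delta_{r+1}\lambda_1$. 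Handling this requires the paper's Lemma~\ref{lemma2} (a Guan--Ren--Wang-type concavity lemma applied to the quotient $\sigma_k/\sigma_\mu$ with $\mu=r$, using $\Gamma_{k+1}\subset\Gamma_{\mu+2}$ to control the off-diagonal Newton remainders $\sigma_{\mu-1}^2(\lambda|ab)-\sigma_\mu(\lambda|ab)\sigma_{\mu-2}(\lambda|ab)$) and, crucially, the inductive construction of Corollary~\ref{cor4}: the thresholds $\delta_1>\delta_2>\cdots>\delta_k$ are built level by level, each $\delta_{r+1}$ chosen small relative to $\delta_r$, so that \emph{some} regime always applies. This cascade is the genuinely new combinatorial ingredient and there is no substitute for it in your proposal.

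Finally a smaller point: in the paper the gradient-weight coefficient $N$ is taken \emph{large} (it must dominate $C(K)$), not small; the bound $(-u)\lambda_1\leq C$ then comes out of a final dichotomy on whether $-u\geq\sigma_k^{11}$ or not, which your proposal also leaves unspecified. So the skeleton is right, but the $(k+1)$-convexity is used twice (for $K_0$ and for the concavity lemma with quotients $\sigma_k/\sigma_\mu$), the case analysis is a $k$-step cascade rather than a two-case split on $\lambda_n$, and the $P_m$ machinery with a fixed large $m$ cannot be replaced by $\log\lambda_1$ without losing the positive terms the argument depends on.
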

Here the definition of the $k$-convex cone is following   Caffarelli-Nirenberg-Spruck \cite{CNS3},
\begin{defi}\label{k-convex} For a domain $\Omega\subset \mathbb R^n$, a function $v\in C^2(\Omega)$ is called $k$-convex if the eigenvalues $\kappa (x)=(\kappa_1(x), \cdots, \kappa_n(x))$ of the hessian $\nabla^2 v(x)$ is in $\Gamma_k$ for all $x\in \Omega$, where $\Gamma_k$ is the Garding's cone
\[\Gamma_k=\{\kappa \in \mathbb R^n \ | \quad \sigma_m(\kappa)>0, \quad  m=1,\cdots,k\}.\]
\end{defi}

Note that  the constant $\beta$ is large in Theorem \ref{theo1}.  We can not improve $\beta$ to be $1$ or $1+\varepsilon $ as Chou-Wang's paper \cite{CW} or Theorem \ref{theo2}.

An application of the interior estimates may to prove rigidity theorems for $k$-Hessian equations. Consider the entire solutions $u$ in $n$-dimensional Euclidean spaces of the following equations,
\begin{eqnarray}\label{1.5}
\sigma_k(D^2u)=1.
\end{eqnarray}
S.-Y. A. Chang and Y. Yuan in \cite{CY} proposed a problem that: Are
the entire solutions of \eqref{1.5} with lower bound  only
quadratic polynomials ?

Let's review known results related the above problem. For $k=1$,
\eqref{1.5} is a linear equation. It is a obvious result coming from
the Liouville property of the harmonic functions. For $k=n$,
Monge-Amp\`ere equation case, it is a well know theorem. For $n=2$,
K. J\"orgens \cite{J} proved that every entire strictly convex
solution is a quadratic polynomial. Then, E. Calabi \cite{C}
obtained the same result for $n=3,4,5$. At last, A.V. Pogorelov
\cite {P},\cite{P3} gave a proof for  all dimensions.  Then, S.Y.
Cheng and S.T. Yau \cite{CY1}  gave another more geometry proof.  In
2003, L. Caffarelli and Y. Li,  \cite{CL} extended the theorem of
J\"orgens, Calabi and Pogorelov.

For $k=2$, S.-Y. A. Chang and Y. Yuan \cite{CY} have proved that, if
$$D^2u\geq \delta-\sqrt{\frac{2n}{n-1}},$$ for any $\delta>0$, then the entire solution of the equation \eqref{1.5} only have quadratic polynomials. For general $k$, it is still open, but J. Bao, J.Y. Chen, B. Guan and M. Ji  in \cite{BCGJ} obtained that,  strictly convex  entire solutions of \eqref{1.5},   satisfying a quadric growth are quadratic polynomials. Here,  the quadratic growth means that, there is some positive constant $c,b$ and sufficiently large $R$, such that,
\begin{eqnarray}
u(x)\geq c|x|^2-b, \text{ for } |x|\geq R.
\end{eqnarray}
 Note that, our interior estimates Theorem \ref{theo2} holds for $k+1$ convex solutions. Hence, we can relax their restriction. In deed, we have proved,
\begin{theo}\label{theo3}
The entire solutions in $k+1$ convex cone  of the equations
\eqref{1.5} defined in $\mathbb{R}^n$ with quadratic growth are
quadratic polynomials.
\end{theo}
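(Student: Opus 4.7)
The plan is to use Theorem \ref{theo2} to bound $\Delta u$ uniformly on $\mathbb{R}^n$, conclude that $D^2u$ takes values in a compact subset of $\Gamma_{k+1}$, and finish by an Evans--Krylov Liouville argument. Let $\Omega_t = \{u<t\}$. The quadratic lower bound $u(x)\geq c|x|^2-b$ for $|x|\geq R$ forces $\Omega_t\subset B_{\sqrt{(t+b)/c}}$. Consider the rescaling
\[
u^t(y)\;=\;\tfrac{1}{t}\,u\bigl(\sqrt{t}\,y\bigr),
\]
which preserves the Hessian, keeps $u^t$ in the $k{+}1$-convex cone, and still satisfies $\sigma_k(D^2u^t)=1$. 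The rescaled sublevel set $\Omega^t:=\{u^t<1\}=t^{-1/2}\Omega_t$ is contained in a ball $B_{R_0}$ whose radius is independent of $t$ for all $t$ large.

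The function $v^t:=u^t-1$ vanishes on $\partial\Omega^t$, is $k{+}1$-convex, and solves $\sigma_k(D^2v^t)=1$. To apply Theorem \ref{theo2} with a constant independent of $t$, I need $\sup_{\Omega^t}|\nabla v^t|$ to be uniform in $t$. Since $|\nabla v^t(y)|=t^{-1/2}|\nabla u(\sqrt{t}\,y)|$, this reduces to a linear estimate $|\nabla u(x)|\leq C(1+|x|)$ on $\mathbb{R}^n$. Such an estimate can be produced by combining the quadratic lower growth with a matching quadratic upper bound $u\leq C(1+|x|^2)$ (obtained from a barrier comparison using the positivity $\Delta u\geq c_0>0$ that follows from $k$-convexity and $\sigma_k=1$ via Newton--Maclaurin) and an interior gradient estimate of Chou--Wang type on $\Omega_{2t}$. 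Granted this input, Theorem \ref{theo2} gives
\[
\bigl(1-u^t(y)\bigr)\,\Delta u^t(y)\;\leq\; C\qquad\text{for every }y\in\Omega^t,
\]
with $C$ independent of $t$.

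Now fix any $x_0\in\mathbb{R}^n$ and choose $t>2\,u(x_0)$. Then $y_0:=x_0/\sqrt{t}\in\{u^t<1/2\}\subset\Omega^t$, so the Hessian invariance of the rescaling combined with the inequality above gives $\Delta u(x_0)=\Delta u^t(y_0)\leq 2C$. Since $x_0$ is arbitrary, $\Delta u$ is uniformly bounded on all of $\mathbb{R}^n$. Combined with $D^2u\in\Gamma_{k+1}$ and $\sigma_k(D^2u)=1$, the Newton--Maclaurin inequalities then confine $D^2u$ to a compact subset $K\subset\Gamma_{k+1}$.

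On $K$ the operator $F(A)=\sigma_k(A)^{1/k}$ is uniformly elliptic and concave. For each $R>0$ the rescaling $u_R(x):=R^{-2}u(Rx)$ solves the same equation with the same Hessian bounds on $B_2$, so the Evans--Krylov theorem yields a universal estimate $[D^2u_R]_{C^\alpha(B_1)}\leq C$; equivalently $[D^2u]_{C^\alpha(B_R)}\leq C R^{-\alpha}\to 0$ as $R\to\infty$. Hence $D^2u$ is constant and $u$ is a quadratic polynomial. The main obstacle in this outline is the uniform linear gradient bound invoked in the second step: without genuine convexity of the sublevel sets one cannot simply use the classical convex-function estimate, so converting the quadratic lower growth hypothesis into a global linear bound on $|\nabla u|$ is where the $k{+}1$-convexity structure and the specific form of the equation must be exploited most carefully.
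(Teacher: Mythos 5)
Your outline has the right skeleton — rescale so that the sublevel sets become uniformly bounded domains, apply an interior $C^2$ estimate on the rescaled Dirichlet problem, pass the bound back to $u$, and finish by Evans--Krylov with the scaling argument $[D^2u]_{C^\alpha(B_R)}\leq CR^{-\alpha}\to 0$. This matches the structure of the paper's proof. But there is a genuine gap at the step you yourself flag, and the paper resolves it in a way your outline does not.

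You invoke Theorem~\ref{theo2}, whose constant $C$ depends on $\sup_{\Omega}|\nabla u|$ (and $\sup_\Omega|u|$). After rescaling, this becomes a requirement for a uniform bound on $\sup_{\Omega^t}|\nabla v^t|=t^{-1/2}\sup|\nabla u|$, i.e.\ a linear growth bound $|\nabla u(x)|\leq C(1+|x|)$ on all of $\mathbb{R}^n$. You correctly identify this as the main obstacle, sketch a route (quadratic upper barrier plus a Chou--Wang-type interior gradient estimate), but do not carry it out, and it is not a triviality: the classical gradient bound via convexity of sublevel sets is unavailable, and the interior gradient estimate for $k$-Hessian equations needs to be checked to produce exactly the linear growth with the correct scaling. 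As written, the argument is incomplete.

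The paper sidesteps this issue entirely by not using Theorem~\ref{theo2} at all. Instead it proves a separate statement, Lemma~\ref{lemm10}, which is an interior estimate of the form $(-u)^\beta\Delta u\leq C$ for the Dirichlet problem with right-hand side $f=f(x)$ independent of $u$ and $\nabla u$, in which the constants $\beta,C$ depend \emph{only} on the diameter of $\Omega$ and on $k$, not on $\sup|\nabla u|$ or $\sup|u|$. This is achieved by using the test function $\varphi=m\beta\log(-u)+\log P_m+\tfrac{m}{2}|x|^2$ with no gradient term $|Du|^2$; because the differentiated equation $\sigma_k^{ii}u_{iij}=f_j$ produces no $u_{jj}$ or $u_j$ terms, there is nothing for a gradient weight to absorb, and the comparison barrier $w=\tfrac{a}{2}|x|^2-b$ controls $\sup|u|$ by the diameter. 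With that gradient-free estimate in hand, the rescaled problem $v=(u(Ry)-R^2)/R^2$ on $\Omega_R=\{u(R\cdot)\leq R^2\}$ (uniformly bounded by quadratic growth) immediately gives $\Delta u\leq C$ on $\{u\leq R^2/2\}$ with absolute $C$, and the rest proceeds as you describe. To repair your proposal, either prove the global linear gradient bound you invoke, or — more in the spirit of the paper — redo the Pogorelov-type estimate for $f=1$ without the $\exp(\tfrac{N}{2}|Du|^2)$ weight so that the constant is gradient-independent.
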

In our proof, we don't need the assumption of strictly convexity. Hence, we do not use  the estimates of L. Caffaralli.  Now, we give the following two Lemmas, which will be needed in our proof.
\par
\begin{lemm} \label{Guan}
Set $k>l$. For $\al=\dfrac{1}{k-l}$, we have,
\begin{eqnarray}\label{1.7}
&&-\frac{\sigma_k^{pp,qq}}{\sigma_k}u_{pph}u_{qqh}+\dfrac{\sigma_l^{pp,qq}}{\sigma_l}u_{pph}u_{qqh}\\
&\geq& \abc{\dfrac{(\sigma_k)_h}{\sigma_k}-\dfrac{(\sigma_l)_h}{\sigma_l}}
\abc{(\al-1)\dfrac{(\sigma_k)_h}{\sigma_k}-(\al+1)\dfrac{(\sigma_l)_h}{\sigma_l}}.\nonumber
\end{eqnarray}
further more, for sufficiently small $\delta>0$, we have,
\begin{eqnarray}\label{1.8}
&&-\sigma_k^{pp,qq}u_{pph}u_{qqh} +(1-\al+\dfrac{\al}{\delta})\dfrac{(\sigma_k)_h^2}{\sigma_k}\\
&\geq& \sigma_k(\al+1-\delta\al)
\abz{\dfrac{(\sigma_l)_h}{\sigma_l}}^2
-\dfrac{\sigma_k}{\sigma_l}\sigma_l^{pp,qq}u_{pph}u_{qqh}.\nonumber
\end{eqnarray}
\end{lemm}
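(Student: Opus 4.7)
The plan is to deduce both inequalities from the concavity of $F=(\sigma_k/\sigma_l)^{\alpha}$ with $\alpha=1/(k-l)$ on the Garding cone $\Gamma_k$, a classical Newton--Maclaurin fact. The first inequality \eqref{1.7} will come from expanding the eigenvalue Hessian of $F$ and factoring the result; the second \eqref{1.8} will then follow by multiplying through by $\sigma_k$ and absorbing a cross term by a weighted AM--GM.

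First I would diagonalize $D^2u$ at the point in question so that $\sigma_k^{pp,qq}$ reduces to the ordinary eigenvalue Hessian. Writing $F=\exp\abc{\alpha(\log\sigma_k-\log\sigma_l)}$ and differentiating twice in the eigenvalue variables yields
\begin{align*}
\frac{F^{pp,qq}}{F} &= \alpha^2\abc{\frac{\sigma_k^{pp}}{\sigma_k}-\frac{\sigma_l^{pp}}{\sigma_l}}\abc{\frac{\sigma_k^{qq}}{\sigma_k}-\frac{\sigma_l^{qq}}{\sigma_l}} \\
&\quad + \alpha\abz{\frac{\sigma_k^{pp,qq}}{\sigma_k}-\frac{\sigma_k^{pp}\sigma_k^{qq}}{\sigma_k^2}-\frac{\sigma_l^{pp,qq}}{\sigma_l}+\frac{\sigma_l^{pp}\sigma_l^{qq}}{\sigma_l^2}}.
\end{align*}
Setting $X=(\sigma_k)_h/\sigma_k$ and $Y=(\sigma_l)_h/\sigma_l$, contracting the identity above with $u_{pph}u_{qqh}$, using that the concavity of $F$ on $\Gamma_k$ forces $F^{pp,qq}u_{pph}u_{qqh}\leq 0$, and dividing by $\alpha F>0$, one reaches
\begin{equation*}
-\frac{\sigma_k^{pp,qq}u_{pph}u_{qqh}}{\sigma_k}+\frac{\sigma_l^{pp,qq}u_{pph}u_{qqh}}{\sigma_l}\geq \alpha(X-Y)^2 + Y^2 - X^2.
\end{equation*}
The right-hand side factors as $(X-Y)\abz{(\alpha-1)X-(\alpha+1)Y}$ (since $Y^2-X^2=-(X-Y)(X+Y)$), which is precisely \eqref{1.7}.

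For \eqref{1.8}, I would multiply \eqref{1.7} by $\sigma_k$, expand the bilinear factor on the right as $(\alpha-1)X^2-2\alpha XY+(\alpha+1)Y^2$, and apply the weighted inequality $-2XY\geq -X^2/\delta-\delta Y^2$ (coming from $(X/\sqrt{\delta}-\sqrt{\delta}\,Y)^2\geq 0$) to the cross term $-2\alpha\sigma_k XY$. Using $\sigma_k X^2=(\sigma_k)_h^2/\sigma_k$ and moving the $X^2$-contribution to the left-hand side yields exactly the coefficients $(1-\alpha+\alpha/\delta)$ and $(\alpha+1-\delta\alpha)$ recorded in \eqref{1.8}.

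The main obstacle, in my view, is not the algebra, which is essentially forced once the pair $X,Y$ is isolated, but rather the justification of the correct concavity statement: one needs the sharp Newton--Maclaurin concavity of $(\sigma_k/\sigma_l)^{1/(k-l)}$ on the full cone $\Gamma_k$ for $k>l$, not merely on $\Gamma_n$. Once this is in hand, \eqref{1.7} is a one-line factorization and \eqref{1.8} is a single application of weighted AM--GM, with careful sign tracking the only delicate point.
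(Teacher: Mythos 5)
The paper does not actually prove Lemma \ref{Guan}; it defers to \cite{GLL} and \cite{GRW}, where the argument is precisely the one you give: differentiate $\log F$ for $F=(\sigma_k/\sigma_l)^{1/(k-l)}$ twice, contract with $u_{pph}u_{qqh}$, invoke concavity of $F$ on $\Gamma_k$ to kill $F^{pp,qq}u_{pph}u_{qqh}$, factor the resulting quadratic in $X=(\sigma_k)_h/\sigma_k$ and $Y=(\sigma_l)_h/\sigma_l$, and then absorb the cross term by weighted AM--GM for \eqref{1.8}. Your algebra checks out (the factorization $\alpha(X-Y)^2+Y^2-X^2=(X-Y)[(\alpha-1)X-(\alpha+1)Y]$ is correct, and the $\delta$-split reproduces the stated coefficients), and the one ingredient you flag as a potential obstacle, concavity of $(\sigma_k/\sigma_l)^{1/(k-l)}$ on all of $\Gamma_k$, is the classical result already in \cite{CNS3}, so no gap remains.
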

\par
The another one is,
\begin{lemm} \label{lemm D}
Denote $Sym(n)$ the set of all $n\times n$ symmetric matrices. Let $F$ be a $C^2$ symmetric function defined in some open subset $\Psi \subset Sym(n)$. At any diagonal matrix $A\in \Psi$ with distinct
eigenvalues, let $\ddot{F}(B,B)$ be the second derivative of $C^2$ symmetric function $F$
in direction $B \in Sym(n)$, then
\begin{eqnarray}
\label{1.9} \ddot{F}(B,B) =  \sum_{j,k=1}^n {\ddot{f}}^{jk}
B_{jj}B_{kk} + 2 \sum_{j < k} \frac{\dot{f}^j -
\dot{f}^k}{{\kappa}_j - {\kappa}_k} B_{jk}^2.
\end{eqnarray}
\end{lemm}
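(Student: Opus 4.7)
The plan is to reduce the computation of $\ddot{F}(B,B)$ to a standard first- and second-order perturbation analysis of the eigenvalues of $A(t):=A+tB$. Since $F$ is symmetric, on a neighborhood of the diagonal matrix $A$ with distinct eigenvalues $\kappa_1,\dots,\kappa_n$ we may write $F(X)=f(\lambda_1(X),\dots,\lambda_n(X))$, where $f$ is a symmetric $C^2$ function of the eigenvalues. Because the eigenvalues of $A$ are distinct, the functions $\lambda_i(X)$ are $C^2$ in a neighborhood of $A$. I would then compute
\[
\ddot{F}(B,B)\;=\;\frac{d^2}{dt^2}\Big|_{t=0}f\bigl(\lambda(A+tB)\bigr)
\;=\;\sum_{j,k}\ddot{f}^{jk}\dot{\lambda}_j\dot{\lambda}_k+\sum_j \dot{f}^j\ddot{\lambda}_j,
\]
so the task reduces to identifying $\dot{\lambda}_j$ and $\ddot{\lambda}_j$ at $t=0$.

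First I would carry out the first-order perturbation. Differentiating the eigenvalue equation $(A+tB)v_j(t)=\lambda_j(t)v_j(t)$ at $t=0$ with $v_j(0)=e_j$ and pairing with $e_j$, the standard calculation gives $\dot{\lambda}_j=B_{jj}$. This already accounts for the first sum on the right hand side of \eqref{1.9}: it equals $\sum_{j,k}\ddot{f}^{jk}B_{jj}B_{kk}$.

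Next comes the main computation, namely the second-order eigenvalue perturbation. Pairing the second derivative of $(A+tB)v_j(t)=\lambda_j(t)v_j(t)$ with $e_j$ and using the expansion $\dot{v}_j=\sum_{k\neq j}c_{kj}e_k$, the normalization $\langle v_j,v_j\rangle=1$ together with pairing the first-derivative equation with $e_k$ for $k\neq j$ yields $c_{kj}=B_{kj}/(\kappa_j-\kappa_k)$; this is the step where the assumption of distinct eigenvalues is essential. Collecting the terms one obtains
\[
\ddot{\lambda}_j \;=\; 2\sum_{k\neq j}\frac{B_{jk}^2}{\kappa_j-\kappa_k}.
\]
This is the step I expect to be the main technical obstacle, as one must carefully handle the normalization of the moving eigenvectors; it is, however, the classical Rellich–Kato formula.

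Finally I would substitute these into the chain-rule expression for $\ddot{F}(B,B)$. The second term becomes
\[
\sum_j \dot{f}^j\ddot{\lambda}_j \;=\; 2\sum_{j\neq k}\frac{\dot{f}^j}{\kappa_j-\kappa_k}B_{jk}^2,
\]
and pairing the $(j,k)$ and $(k,j)$ contributions (using $B_{jk}=B_{kj}$) symmetrizes the numerator to $\dot{f}^j-\dot{f}^k$ and gives
\[
\sum_j \dot{f}^j\ddot{\lambda}_j \;=\; 2\sum_{j<k}\frac{\dot{f}^j-\dot{f}^k}{\kappa_j-\kappa_k}B_{jk}^2,
\]
which combined with the first term yields exactly \eqref{1.9}. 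The only nontrivial structural point of the argument is thus the eigenvalue perturbation in the previous paragraph; everything else is bookkeeping via the chain rule and a symmetrization of summation indices.
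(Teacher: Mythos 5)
Your argument is correct and is essentially the standard derivation. The paper itself does not prove Lemma \ref{lemm D}; it only cites \cite{Ball} and \cite{CNS3}, and the proof in those references is exactly the first- and second-order eigenvalue perturbation computation you carry out (Rellich--Kato), combined with the chain rule and symmetrization in $j,k$.
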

The proof of the first Lemma can be found in \cite{GLL} and \cite{GRW}. The second Lemma can be found in \cite{Ball} and \cite{CNS3}.

The paper is organized by three sections. The first section gives the interior estimates of $2$-Hessian case. The second section gives the interior estimates for $k+1$ convex solutions. The last section proves the rigidity theorem.

\section{An interior $C^2$ estimate for $\sigma_2$ equations}
In this section, we prove Theorem \ref{theo1}. We consider the following test function,
$$
M=\dmax_{|\xi|=1,x\in\Omega}(-u)^{\beta}exp\{\dfrac{\varepsilon}{2}|Du|^2+\dfrac{a}{2}|x|^2\}u_{\xi\xi},
$$
where $\beta,\varepsilon$ and $a$ are three constants which we will be determined later. Suppose that
$M$ achieve its maximum value in $\Omega$ at some point $x_0$ along
some direction $\xi$. We can assume that $\xi=(1,0,\cdots,0)$. By
rotating the coordinate, we diagonal the matrix $(u_{ij})$, and we
also can assume that $u_{11}\geq u_{22}\cdots\geq u_{nn}$.
\par
Hence, at $x_0$, differentiating the test function twice,  we have
\begin{equation}\label{2.1}
\dfrac{\beta u_i}{u}+\dfrac{u_{11i}}{u_{11}}+\varepsilon u_iu_{ii}+ax_i=0,
\end{equation}
and,
\begin{equation}\label{4}
\dfrac{\beta u_{ii}}{u}-\dfrac{\beta u_i^2}{u^2}+\dfrac{u_{11ii}}{u_{11}}-\dfrac{u_{11i}^2}{u_{11}^2}+\dsum_k\varepsilon
u_ku_{kii}+\varepsilon u_{ii}^2+a\leq 0.\nonumber
\end{equation}
In the above inequality, contracting with $\sigma_2^{ii}$, we have,
\begin{eqnarray}\label{2.2}
&&\dfrac{2\beta \sigma_2}{u}-\dfrac{\beta \sigma_2^{ii}u_i^2}{u^2}+\dfrac{\sigma_2^{ii}u_{11ii}}{u_{11}}
-\dfrac{\sigma_2^{ii}u_{11i}^2}{u_{11}^2}\\
&&+\dsum_k\varepsilon u_k\sigma_2^{ii}u_{kii}+\varepsilon \sigma_2^{ii}u_{ii}^2+(n-1)a\sigma_1\leq 0.\nonumber
\end{eqnarray}
\par
At $x_0$, differentiating  equation (\ref{1.1}) twice, we have,
\begin{equation}\label{2.3}
\sigma_2^{ii}u_{iij}=f_j+f_uu_j+f_{p_j}u_{jj},
\end{equation}
and
\begin{align}
\sigma_2^{ii}u_{iijj}+\sigma_2^{pq,rs}u_{pqj}u_{rsj} \geq
-C-Cu_{jj}^2+\dsum_kf_{p_k}u_{kjj}.      \label{2.4}
\end{align}
Inserting  (\ref{2.4}) into (\ref{2.2}), we have,
\begin{align}
0\geq &\dfrac{2\beta \sigma_2}{u}-\dfrac{\beta \sigma_2^{ii}u_i^2}{u^2}
+\dfrac{1}{u_{11}}[-C-Cu_{11}^2+\dsum_kf_{p_k}u_{k11}
-K(\sigma_2)_1^2+K(\sigma_2)_1^2      \label{2.5}\\
&-\sigma_2^{pq,rs}u_{pq1}u_{rs1}]
-\dfrac{\sigma_2^{ii}u_{11i}^2}{u_{11}^2}
+\dsum_k\varepsilon u_k\sigma_2^{ii}u_{kii}+\varepsilon \sigma_2^{ii}u_{ii}^2+(n-1)a\sigma_1.  \nonumber
\end{align}
\par
Using (\ref{2.1}) and (\ref{2.3}), we have,
$$
\frac{1}{u_{11}}\dsum_kf_{p_k}u_{k11}+\dsum_k\varepsilon
u_k\sigma_k^{ii}u_{kii}\geq -C-\dsum_k\dfrac{\beta u_kf_{p_k}}{u}.
$$
Note that
\begin{align*}
-\sigma_2^{pq,rs}u_{pq1}u_{rs1}=&-\sigma_2^{pp,qq}u_{pp1}u_{qq1}+\dsum_{p\neq q}u_{pq1}^2  \\
\geq &-\sigma_2^{pp,qq}u_{pp1}u_{qq1}+2\dsum_{i\neq 1}u_{11i}^2 .
\end{align*}
Using Lemma \ref{Guan}, there exists some sufficiently large constant $K$ depending on $f$, such that,
$$
K(\sigma_2)_1^2-\sigma_2^{pp,qq}u_{pp1}u_{qq1}\geq 0.
$$
Using the above two formulas, inequality \eqref{2.5} becomes,
\begin{align}
-\frac{C}{u}\geq &-\dfrac{\beta \sigma_2^{ii}u_i^2}{u^2}
+\dfrac{2}{u_{11}}\dsum_{i\neq 1}u_{11i}^2
-\dfrac{\sigma_2^{ii}u_{11i}^2}{u_{11}^2}
+\varepsilon \sigma_2^{ii}u_{ii}^2         \label{2.6}\\
&+(n-1)a\sigma_1-Cu_{11}-C. \nonumber
\end{align}
Take a sufficiently large $a$ such that,
$$
(n-1)a\sigma_1-Cu_{11}-C\geq a\sigma_1.
$$
\par
Here, we always assume that $u_{11}$ is sufficiently large. Now we should divide into
two  cases to deal with other third order derivatives.
\par
\noindent (A) Suppose   $\dsum_{i=2}^{n-1}\lambda_i\leq
\lambda_1/3$. In this case, using (\ref{2.1}), we have,
\begin{align}\label{2.7}
-\dfrac{\beta \sigma_2^{ii}u_i^2}{u^2}\geq
-\dfrac{2\sigma_2^{ii}}{\beta}\dfrac{u_{11i}^2}{u_{11}^2}
-\dfrac{2\sigma_2^{ii}}{\beta}(\varepsilon u_iu_{ii}+ax_i)^2.
\end{align}
Using (\ref{2.6}) and \eqref{2.7},  we have,
\begin{align}
-\dfrac{C}{u}\geq & -\dfrac{\beta \sigma_2^{11}u_1^2}{u^2}
+\dfrac{2}{u_{11}}\dsum_{i\neq 1}u_{11i}^2
-(1+\dfrac{2}{\beta})\dsum_{i\neq 1}\dfrac{\sigma_2^{ii}u_{11i}^2}{u_{11}^2}  \label{2.8}\\
&-\dfrac{\sigma_2^{11}u_{111}^2}{u_{11}^2} +\varepsilon
\sigma_2^{ii}u_{ii}^2 -\dsum_{i\neq
1}\dfrac{2\sigma_2^{ii}}{\beta}(\varepsilon u_iu_{ii}+ax_i)^2
+a\sigma_1.  \nonumber
\end{align}
Since,  $\dsum_{i=2}^{n-1}\lambda_i\leq \dfrac{\lambda_1}{3}$, we
have, for sufficiently large $\beta$,
$$
\dfrac{2}{u_{11}}\dsum_{i\neq 1}u_{11i}^2
-(1+\dfrac{2}{\beta})\dsum_{i\neq 1}\dfrac{\sigma_2^{ii}u_{11i}^2}{u_{11}^2}\geq 0.
$$
Again, using (\ref{2.1}),  we have,
$$
-\dfrac{\sigma_2^{11}u_{111}^2}{u_{11}^2} \geq
-2\sigma_2^{11}(\dfrac{\beta u_1}{u})^2-2\sigma_2^{11}(\varepsilon
u_1u_{11}+ax_1)^2.
$$
Then
we obtain,
\begin{align}\label{2.9}
&-\dfrac{C}{u}+\dfrac{(\beta+2\beta^2) \sigma_2^{11}u_1^2}{u^2} \\
\geq &\varepsilon \sigma_2^{ii}u_{ii}^2 -\dsum_{i\neq
1}\dfrac{2\sigma_2^{ii}}{\beta}(\varepsilon u_iu_{ii}+ax_i)^2
-2\sigma_2^{11}(\varepsilon u_1u_{11}+ax_1)^2+a\sigma_1 \nonumber\\
\geq &\dsum_i\varepsilon \sigma_2^{ii}u_{ii}^2
-\dsum_{i\neq 1}\dfrac{4\sigma_2^{ii}}{\beta}\varepsilon^2 u_i^2u_{ii}^2
-\dsum_{i\neq 1}\dfrac{4\sigma_2^{ii}}{\beta}a^2x_i^2
-4\sigma_2^{11}\varepsilon^2 u_1^2u_{11}^2 \nonumber \\
&-4\sigma_2^{11}a^2x_1^2+au_{11}.   \nonumber
\end{align}
We choose  $\varepsilon$ and $\beta$, such that $$\varepsilon>8\varepsilon^2\max_{\Omega}|Du|^2, \text{ and } \beta>a^2.$$
 Hence, \eqref{2.9} becomes,
\begin{align}\label{2.10}
-\dfrac{C}{u}+\dfrac{C \sigma_2^{11}}{u^2} \geq
\frac{\varepsilon}{2} \sigma_2^{11}u_{11}^2
-4\sigma_2^{11}a^2x_1^2+(a-C)u_{11}.
\end{align}
Taking $a$ and $u_{11}$ sufficiently large, we obtain  \eqref{1.3}.
\par
\noindent (B) \ \ If $\dsum_{i=2}^{n-1}\la_i\geq \dfrac{\la_1}{3}$,
then we have $\dfrac{\la_1}{3(n-2)}\leq\la_2\leq\la_1$. Using
(\ref{2.7}), (\ref{2.6}) becomes,
\begin{align}
&-\dfrac{C}{u}+\dsum_i\dfrac{(\beta+2\beta^2) \sigma_2^{ii}u_i^2}{u^2}\\
\geq &\dsum_i\varepsilon \sigma_2^{ii}u_{ii}^2
-4\dsum_i\sigma_2^{ii}\varepsilon^2 u_i^2u_{ii}^2
-4\dsum_i\sigma_2^{ii}a^2x_i^2+a\sigma_1. \nonumber
\end{align}

We should divide this case into two subcases, (B1)
$\sigma_2^{22}\geq 1$ and (B2) $\sigma_2^{22}< 1$.  We also take a
sufficiently small  $\varepsilon$, such that
$\varepsilon>8\varepsilon^2\max_{\Omega}|Du|^2$. In both subcases,
the right hand side of the above inequality always has high order
term $u_{11}^2$ or $u_{11}^3$, then we have \eqref{1.3}. See
\cite{GRW} for detail.

\section{An interior $C^2$ estimate for $k+1$ convex solutions  }
\par
In this section, we consider the interior estimates for $k$ Hessian
equations \eqref{1.1}.  We will prove Theorem \ref{theo2}. Before we
start our proof, we need the following fact.
\begin{lemm}\label{lemm 7}
Suppose $u$ is a $k+1$ convex solution for equation \eqref{1.1}.
Then, there is some constant $K_0>0$ depending on the diameter of
the domain $\Omega$, $\sup_{\Omega}|u|$ and $\sup_{\Omega}|\nabla
u|$, such that,
$$D^2u+K_0I\geq 0.$$ Here "$\geq 0"$ means the matrix is semi positive definite.
\end{lemm}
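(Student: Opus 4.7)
My plan is to combine the classical boundary $C^2$ estimate for $k$-convex Dirichlet solutions with an interior maximum principle argument for the function $W(x):=\max_{|\xi|=1}(-u_{\xi\xi}(x))$, using the equation together with Lemma \ref{Guan} to close the estimate. The $k+1$-convexity enters through the positivity of $\sigma_{k-1}$ together with $\sigma_k>0$, ensuring that the differential inequality obtained from twice-differentiating \eqref{1.1} has a favourable sign after Lemma \ref{Guan} is applied.

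First, by the standard boundary $C^2$ estimate for the Dirichlet problem \eqref{1.1} (valid for all $k$-convex solutions), one has $|D^2u|\le C_1$ on $\partial\Omega$, with $C_1$ depending on $f$, $\sup|u|$, $\sup|\nabla u|$ and the geometry of $\partial\Omega$; in particular $W\le C_1$ on $\partial\Omega$. If the maximum of $W$ on $\bar\Omega$ is attained on $\partial\Omega$ we are done with $K_0:=C_1$. Otherwise the maximum is attained at an interior point $x_0$ in some direction $\xi_0$; after rotating coordinates so $\xi_0=e_n$ and $D^2u(x_0)$ is diagonal with $u_{11}(x_0)\ge\cdots\ge u_{nn}(x_0)$, we have $u_{nn}(x_0)=-W(x_0)$, the critical-point conditions $u_{nnj}(x_0)=0$ hold for all $j$, and the matrix $(u_{nnij}(x_0))_{ij}$ is negative semidefinite. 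Contracting with the positive-definite $\sigma_k^{ij}$ yields $\sigma_k^{ij}u_{nnij}(x_0)\le 0$.

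Second, I would differentiate $\sigma_k(D^2u)=f(x,u,\nabla u)$ twice in the $e_n$-direction. Using $u_{nnj}(x_0)=0$ to eliminate the $f_{p_l}u_{lnn}$ term and combining with the contracted second-order inequality produces an upper bound on $-\sigma_k^{pq,rs}u_{pqn}u_{rsn}$ in terms of first and second order derivatives of $u$. I would then apply Lemma \ref{Guan} with $l=k-1$ (valid since $u\in\Gamma_{k+1}\subset\Gamma_k$, so $\sigma_{k-1}>0$), converting the quadratic form $-\sigma_k^{pq,rs}u_{pqn}u_{rsn}$ into a positive multiple of $(\sigma_k)_n^2/\sigma_k$ up to the non-negative contribution $-(\sigma_k/\sigma_{k-1})\sigma_{k-1}^{pq,rs}u_{pqn}u_{rsn}$ and lower-order terms. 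The first-variation identity $\sigma_k^{ij}u_{ijn}=\partial_nf+f_uu_n+f_{p_l}u_{ln}$ then expresses $(\sigma_k)_n$ in terms of $u_{nn}(x_0)=-W(x_0)$ and bounded data, so combining these pieces gives a polynomial inequality of the form $cW(x_0)^2\le C(1+W(x_0))$, which forces $W(x_0)\le K_0$ with $K_0$ depending only on $\dm(\Omega)$, $\sup|u|$ and $\sup|\nabla u|$.

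The main obstacle is the algebra in step two: several terms of opposite signs and mixed orders appear, and the small parameter $\delta$ in \eqref{1.8} must be chosen carefully so that the positive quadratic in $W(x_0)$ dominates the error terms and the critical-point relations are used consistently. The $k+1$-convexity is indispensable here, because Lemma \ref{Guan} needs $\sigma_{k-1}>0$ and produces a sharper lower bound on $-\sigma_k^{pq,rs}u_{pqn}u_{rsn}$ than the plain concavity of $\sigma_k^{1/k}$; without it one only recovers the weaker estimate of Chou--Wang \cite{CW} that costs a power loss $\varepsilon>0$.
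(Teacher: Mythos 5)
Your proposal takes a maximum-principle/Pogorelov route that is both unnecessary and, at a key step, incorrect. The paper's proof of Lemma~\ref{lemm 7} is a purely \emph{pointwise algebraic} argument, using no differentiation of the equation, no boundary estimates, and no maximum principle: one simply observes that for $\lambda=(\lambda_1,\dots,\lambda_n)\in\Gamma_{k+1}$ with $\lambda_1\geq\cdots\geq\lambda_n$, the chain
\[
\sigma_k(\lambda)=\lambda_1\sigma_{k-1}(\lambda|1)+\sigma_k(\lambda|1)\geq\lambda_1\sigma_{k-1}(\lambda|1)
\geq\lambda_1\lambda_2\sigma_{k-2}(\lambda|12)\geq\cdots\geq\lambda_1\cdots\lambda_k\geq\lambda_k^k
\]
holds (each dropped term $\sigma_j(\lambda|1\cdots j)$ is nonnegative because $\lambda\in\Gamma_{k+1}$), so $\lambda_k\leq(\sup f)^{1/k}$; then, since $\lambda\in\Gamma_k$ forces $\lambda_k+\lambda_{k+1}+\cdots+\lambda_n>0$, one gets $\lambda_n>-(n-k)\lambda_k\geq -K_0$. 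This is a one-line deduction at each fixed $x\in\Omega$; the diameter, $\sup|u|$, $\sup|\nabla u|$ enter only through $\sup_\Omega f(x,u,\nabla u)$.

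Beyond being far more complicated than needed, your argument has a genuine flaw. You write that at an interior maximum of $W(x)=\max_{|\xi|=1}(-u_{\xi\xi}(x))$, the matrix $(u_{nnij}(x_0))$ is negative semidefinite; the sign is reversed. Since $w(x):=-u_{nn}(x)$ satisfies $w\leq W$ and $w(x_0)=W(x_0)$, the point $x_0$ is a local maximum of $w$, so $D^2w(x_0)=-(u_{nnij}(x_0))\leq 0$, i.e.\ $(u_{nnij}(x_0))$ is \emph{positive} semidefinite. Contracting with $\sigma_k^{ij}>0$ then yields $\sigma_k^{ij}u_{ijnn}\geq 0$, and the twice-differentiated equation gives $\sigma_k^{pq,rs}u_{pqn}u_{rsn}\leq(\text{bounded data})$ — an upper bound on a quantity already controlled from above by concavity, which carries no information. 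The whole Pogorelov mechanism is structured to produce upper bounds on $\lambda_1$ (where the second-order test gives $\sigma_k^{ij}u_{11ij}\leq 0$ with the favourable sign); it does not transfer to lower bounds on $\lambda_n$. Separately, your step one appeals to boundary $C^2$ estimates, whose constants depend on the geometry and smoothness of $\partial\Omega$ rather than merely on $\dm(\Omega)$ as the Lemma requires, and these estimates are not available on the nonsmooth sublevel domains $\Omega_R$ used later in the rigidity argument.
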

\begin{proof}
We choose $K_0$ satisfying $$(\frac{K_0}{n})^k\geq \sup_{\Omega} f(x,u,\nabla u).$$ Suppose $\la_1\geq \la_2\geq\cdots\geq \la_n$ is the eigenvalues of the Hessian $D^2u$. Then,  we have, using $u\in \Gamma_{k+1}$,
\begin{eqnarray}
\sigma_k&=&\sigma_{k-1}(\la|1)\la_1+\sigma_k(\la|1)\geq \sigma_{k-1}(\la|1)\la_1\nonumber\\
&=&\sigma_{k-2}(\la|12)\la_1\la_2+\la_1\sigma_{k-1}(\la|12)\geq \nonumber\sigma_{k-2}(\la|12)\la_1\la_2\nonumber\\
&=&\cdots\geq \cdots\nonumber\\
&=&\la_1\la_2\cdots\la_k\geq \la_k^k.\nonumber
\end{eqnarray}
Hence, $\la_k\leq K_0/n$. Since, $u\in\Gamma_k$, we have,
$$\sum_{i=k}^n\la_i>0,$$  which implies that
$\la_n+K_0\geq 0$.
We obtain the Lemma.
\end{proof}

We use the $m$-polynomials.  Here, $m$ should be sufficiently large to give more convexity, since we have more negative terms.  Let's consider the following test function,
\begin{eqnarray}
\varphi&=&m\log(-u)+\log P_m+\frac{mN}{2}|Du|^2,
\end{eqnarray}
where $$P_m=\sum_j\kappa_j^m, \text{ and } \kappa_j=\la_j+K_0,$$ and
$N$ is some undetermined constant. The $\la_1,\la_2,\cdots,\la_n$
are eigenvalues of the Hessian $D^2u$. By Lemma \ref{lemm 7},
$\kappa_1,\kappa_2,\cdots,\kappa_n$ are non negative. Suppose that
function $\varphi$ achieves its maximum value in $\Omega$ at some
point $x_0$. Rotating the coordinates, we assume that $(u_{ij})$ is
diagonal matrix at $x_0$, and $\kappa_1\geq \kappa_2\cdots\geq
\kappa_n$.

\par
Differentiating our test function twice and using Lemma \ref{lemm D}, at $x_0$, we have,
\begin{equation}\label{3.2}
\dfrac{\dsum_j\kappa_j^{m-1}u_{jji}}{P_m}+Nu_iu_{ii}+\frac{ u_i}{u}=0,
\end{equation}
and,
\begin{align}\label{3.3}
0\geq &\dfrac{1}{P_m}[\dsum_j\kappa_j^{m-1}u_{jjii}+(m-1)\dsum_j\kappa_j^{m-2}u_{jji}^2
+\dsum_{p\neq q}\dfrac{\kappa_p^{m-1}-\kappa_q^{m-1}}{\kappa_p-\kappa_q}u_{pqi}^2] \\
&-\dfrac{m}{P_m^2}(\dsum_j\kappa_j^{m-1}u_{jji})^2 +\dsum_s N
u_su_{sii}+N
u_{ii}^2+\frac{u_{ii}}{u}-\frac{u_i^2}{u^2}.\nonumber
\end{align}
\par
At $x_0$, differentiating the equation(\ref{1.1}) twice, we have,
\begin{equation}\label{3.4}
\sigma_k^{ii}u_{iij}=\psi_{p_j}u_{jj}+\psi_{u}u_{j}+\psi_j,
\end{equation}
and
\begin{equation}\label{3.5}
\sigma_k^{ii}u_{iijj}+\sigma_k^{pq,rs}u_{pqj}u_{rsj}
\geq -C-C u_{11}^2+\dsum_s\psi_{p_s}u_{sjj}.
\end{equation}
Here, $C$ is a constant depending on $f$, the diameter of the domain
$\Omega$, $\sup_{\Omega}|u|$ and $\sup_{\Omega}|\nabla u|$ .
Contacting $\sigma_k^{ii}$ in both side of (\ref{3.3}), and using
(\ref{3.4})(\ref{3.5}), we get,
\begin{eqnarray}\label{3.6}
\\
0&\geq&\dfrac{1}{P_m}[\dsum_l\kappa_l^{m-1}(-C-Cu_{11}^2+\dsum_s\psi_{p_s}u_{sll}
-K(\sigma_k)_l^2+K(\sigma_k)_l^2-\sigma_k^{pq,rs}u_{pql}u_{rsl}) \nonumber\\
&&+(m-1)\sigma_k^{ii}\dsum_j\kappa_j^{m-2}u_{jji}^2
+\sigma_k^{ii}\dsum_{p\neq q}\dfrac{\kappa_p^{m-1}-\kappa_q^{m-1}}{\kappa_p-\kappa_q}u_{pqi}^2]
-\dfrac{m\sigma_k^{ii}}{P_m^2}(\dsum_j \kappa_j^{m-1}u_{jji})^2\nonumber\\
&&+\dsum_s N u_su_{sii}\sigma_k^{ii}+N u_{ii}^2\sigma_k^{ii}
+\dfrac{ k\sigma_k}{u}-\dfrac{\sigma_k^{ii}u_i^2}{u^2}.  \nonumber
\end{eqnarray}
\par
Using (\ref{3.2}) and (\ref{3.4}), we have,
$$
\dfrac{1}{P_m}\dsum_l\dsum_s\kappa_l^{m-1}\psi_{p_s}u_{sll}+\dsum_s N u_s\sigma_k^{ii}u_{sii}\geq-\dsum_s\psi_{p_s}\frac{u_s}{u}-C.
$$
On the other hand, we have,
$$
-\sigma_k^{pq,rs}u_{pql}u_{rsl}=-\sigma_k^{pp,qq}u_{ppl}u_{qql}+\sigma_k^{pp,qq}u_{pql}^2.
$$
Then, using the previous two formulas, (\ref{2.8}) becomes,
\begin{eqnarray}\label{3.7}
\\
0&\geq&\dfrac{1}{P_m}[\dsum_l\kappa_l^{m-1}(-C-Cu_{11}^2-K\psi_{p_l}^2u_{ll}^2
+K(\sigma_k)_l^2-\sigma_k^{pp,qq}u_{ppl}u_{qql}+\sigma_k^{pp,qq}u_{pql}^2)\nonumber\\
&&+(m-1)\sigma_k^{ii}\dsum_j\kappa_j^{m-2}u_{jji}^2
+\sigma_k^{ii}\dsum_{p\neq q}\dfrac{\kappa_p^{m-1}-\kappa_q^{m-1}}{\kappa_p-\kappa_q}u_{pqi}^2]\nonumber\\
&&-\dfrac{m\sigma_k^{ii}}{P_m^2}(\dsum_j \kappa_j^{m-1}u_{jji})^2
+N u_{ii}^2\sigma_k^{ii}
+\frac{k\sigma_k}{u}-\frac{\sigma_k^{ii}u_i^2}{u^2}-\sum_s\psi_{p_s}\dfrac{ u_s}{u}.\nonumber
\end{eqnarray}
\par
Let's deal with the third order derivatives.
Denote,
\par
$A_i=\dfrac{\kappa_i^{m-1}}{P_m}(K(\sigma_k)_i^2-\dsum_{p,q}\sigma_k^{pp,qq}u_{ppi}u_{qqi})$, ~~
$B_i=\dfrac{2\kappa_j^{m-1}}{P_m}\dsum_j\sigma_k^{jj,ii}u_{jji}^2$,
\par
$C_i=\dfrac{m-1}{P_m}\sigma_k^{ii}\dsum_j\kappa_j^{m-2}u_{jji}^2$,~~
$D_i=\dfrac{2\sigma_k^{jj}}{P_m}\dsum_{j\neq i}\dfrac{\kappa_j^{m-1}-\kappa_i^{m-1}}{\kappa_j-\kappa_i}u_{jji}^2$,
\par
$E_i=\dfrac{m\sigma_k^{ii}}{P_m^2}(\dsum_j \kappa_j^{m-1}u_{jji})^2$.
\par
We divide two cases to deal with the third order deriavatives, $i\neq 1$ and $i=1$.
\begin{lemm}\label{lemma8}
For any $i\neq 1$, we have
$$A_i+B_i+C_i+D_i-(1+\frac{1}{m})E_i\geq 0,$$ for sufficiently large $m$.
\end{lemm}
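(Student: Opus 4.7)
The plan is as follows. Of the five quantities appearing in the combination, $B_i$, $C_i$, and $D_i$ are all manifestly non-negative: $C_i\geq 0$ because $u\in\Gamma_k$ forces $\sigma_k^{ii}\geq 0$ and Lemma \ref{lemm 7} makes every $\kappa_j\geq 0$; $D_i\geq 0$ because $t\mapsto t^{m-1}$ is non-decreasing on $[0,\infty)$, so the divided differences in its definition are non-negative; and $B_i\geq 0$ because the $(k{+}1)$-convexity of $u$ yields $\sigma_k^{jj,ii}=\sigma_{k-2}(\la|ij)\geq 0$. The term $A_i$ is not manifestly non-negative, but I would invoke Lemma \ref{Guan} with $h=i$ (and $K$ large enough in terms of $f$ and $\sup|\nabla u|$) to make it so. Hence the only wrong-sign contribution is $E_i$, and the whole task is quantitative: absorb $(1+1/m)E_i$ into the positive combination.

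My two workhorse estimates will be the Cauchy--Schwarz inequality
\[
\abc{\dsum_j \kappa_j^{m-1}u_{jji}}^2\leq P_m\dsum_j\kappa_j^{m-2}u_{jji}^2,
\]
which gives $(1+1/m)E_i\leq \frac{m+1}{m-1}C_i$, and the elementary identity
\[
\dfrac{\kappa_j^{m-1}-\kappa_i^{m-1}}{\kappa_j-\kappa_i}=\dsum_{l=0}^{m-2}\kappa_j^l\kappa_i^{m-2-l}\geq \kappa_j^{m-2}\quad(j\neq i,\ \kappa_j,\kappa_i\geq 0),
\]
which gives $D_i\geq \frac{2}{m-1}(C_i-C_i^{(j=i)})$, where $C_i^{(j=i)}:=\frac{(m-1)\sigma_k^{ii}\kappa_i^{m-2}u_{iii}^2}{P_m}$ is the $j=i$ summand of $C_i$. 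Combining these,
\[
C_i+D_i-\abc{1+\dfrac{1}{m}}E_i\geq -\dfrac{2\sigma_k^{ii}\kappa_i^{m-2}u_{iii}^2}{P_m},
\]
so the lemma reduces to proving $A_i+B_i\geq \frac{2\sigma_k^{ii}\kappa_i^{m-2}u_{iii}^2}{P_m}$ for all sufficiently large $m$.

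To close this last inequality I would expand $(\sigma_k)_i^2=(\dsum_p\sigma_k^{pp}u_{ppi})^2$ inside $A_i$: the diagonal contribution $K\kappa_i^{m-1}(\sigma_k^{ii})^2u_{iii}^2/P_m$ is the natural dominator, while the cross terms $u_{iii}u_{ppi}$ ($p\neq i$) I would split by $\varepsilon$-Cauchy, absorbing the $u_{ppi}^2$ residues into the slack still available in $D_i$ (one can replace the crude bound $\kappa_j^{m-2}$ by the sharper $\max(\kappa_i,\kappa_j)^{m-2}$) and in $B_i$ (whose weights $\sigma_k^{jj,ii}$ are strictly positive under $(k{+}1)$-convexity). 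The assumption $i\neq 1$ enters precisely here: it guarantees $\kappa_1\geq \kappa_i$, so that the dominant $\kappa_1$-weight sits inside a positive contribution (the $j=1$ summand of $D_i$ and of $B_i$).

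The hardest part will be the degenerate regime $\kappa_i\to 0$, in which the $u_{iii}^2$-slot in $A_i$ carries weight $\kappa_i^{m-1}$ while the right-hand side carries only $\kappa_i^{m-2}$, a gap of order $\kappa_i$ that the purely algebraic route cannot close. In this regime I would instead invoke the critical-point equation \eqref{3.2},
\[
\kappa_i^{m-1}u_{iii}=-P_m\abc{Nu_iu_{ii}+\dfrac{u_i}{u}}-\dsum_{j\neq i}\kappa_j^{m-1}u_{jji},
\]
to eliminate $u_{iii}$ altogether. Squaring and a second Cauchy--Schwarz then split $\kappa_i^{m-2}u_{iii}^2/P_m$ into a bounded gradient/potential piece (which is harmless, since no third derivatives appear there) plus a $u_{jji}^2$-piece for $j\neq i$, which is re-absorbed into the slack of $C_i+D_i$ for $j\neq i$. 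The extra $1/m$ in the factor $1+1/m$ is exactly the slack that makes this chain of absorptions work once $m$ is taken large enough.
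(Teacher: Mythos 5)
Your decomposition takes a genuinely different route from the paper's, and while the opening observations are correct ($B_i,C_i,D_i\geq 0$, $A_i\geq 0$ by Lemma~\ref{Guan}, and $E_i\leq \frac{m}{m-1}C_i$ by Cauchy--Schwarz), the reduction and the closing step both have gaps that do not appear repairable along the lines you sketch.

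First, the claimed bound $D_i\geq \frac{2}{m-1}\bigl(C_i-C_i^{(j=i)}\bigr)$ requires, after the divided-difference estimate, that $\sigma_k^{jj}\kappa_j^{m-2}\geq \sigma_k^{ii}\kappa_j^{m-2}$ for every $j\neq i$; with the ordering $\kappa_1\geq\cdots\geq\kappa_n$ one has $\sigma_k^{jj}\leq \sigma_k^{ii}$ precisely when $j<i$, so the inequality already fails for $j=1$ (the case $i\neq 1$ forces $j=1$ to occur). You are aware the crude bound needs sharpening, but replacing $\kappa_j^{m-2}$ by $\max(\kappa_i,\kappa_j)^{m-2}$ does not fix the $\sigma_k^{jj}$ deficit. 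The paper resolves exactly this point with the structural identity $\kappa_j\sigma_k^{jj,ii}+\sigma_k^{jj}=\sigma_k^{ii}+\kappa_i\sigma_k^{jj,ii}\geq\sigma_k^{ii}$ (their \eqref{3.10}), which couples a $D_i$-term with a $B_i$-term of the \emph{same} index $j$ to manufacture the needed $\sigma_k^{ii}$ weight; that coupling is the precise mechanism your proposal is missing.

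Second, and more seriously, the treatment of the degenerate regime $\kappa_i\to 0$ via the critical-point equation \eqref{3.2} does not close. Solving \eqref{3.2} for $u_{iii}$ gives
\[
\kappa_i^{m-1}u_{iii}=-\sum_{j\neq i}\kappa_j^{m-1}u_{jji}-P_m\Bigl(Nu_iu_{ii}+\frac{u_i}{u}\Bigr),
\]
so that the target quantity becomes $\dfrac{\kappa_i^{m-2}u_{iii}^2}{P_m}=\dfrac{1}{P_m\,\kappa_i^{m}}\bigl(\cdots\bigr)^2$, and the ``gradient/potential piece'' is $\dfrac{P_m}{\kappa_i^{m}}\bigl(Nu_iu_{ii}+\frac{u_i}{u}\bigr)^2$. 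Since $P_m\geq\kappa_1^m$ while $\kappa_i\to 0$, this piece is not bounded; it is not ``harmless''. The paper's proof never uses \eqref{3.2} in Lemma~\ref{lemma8}; the $\kappa_i^{m-2}u_{iii}^2$ weight is controlled purely algebraically by pairing $u_{iii}$ against $u_{11i}$ through the quadratic form in \eqref{3.17}: the coefficients are $(m+8)\kappa_i^m\kappa_1^{m-2}$ and $(m-3)\kappa_1^m\kappa_i^{m-2}$ whose product equals $(m+8)(m-3)\kappa_i^{2m-2}\kappa_1^{2m-2}$, exactly matching the square of the cross coefficient $(m+1)\kappa_i^{m-1}\kappa_1^{m-1}$ up to the constant --- the $\kappa$'s cancel, so the degenerate regime poses no issue. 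The extra $\kappa_1^m$ weight on $u_{iii}^2$, which makes the cancellation happen, is harvested from the $2P_m\sigma_k^{jj}\sum_{l}\kappa_i^{m-2-l}\kappa_j^l$ slack in $D_i$ via a further case split on $\lambda_i\lessgtr K_0$; that harvesting step, together with \eqref{3.10}, is the substantive content of the paper's argument and is absent from your plan.
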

\begin{proof}
 At first, by Lemma \ref{Guan}, for sufficiently large $K$, we have,
\begin{equation}\label{3.8}
K(\sigma_k)_l^2-\sigma_k^{pp,qq}u_{ppl}u_{qql}\geq\sigma_k(1+\dfrac{\al}{2})[\dfrac{(\sigma_1)_l}{\sigma_1}]^2\geq 0.
\end{equation}
Hence, $A_i\geq 0$.
\par
Then, we also have,
\begin{eqnarray}\label{3.9}
&&P_m^2[B_i+C_i+D_i-(1+\frac{1}{m})E_i]\\
&=&\sum_{j\neq i}P_m[2\kappa_j^{m-1}\sigma_k^{jj,ii}+(m-1)\kappa_j^{m-2}\sigma_k^{ii}
+2\sigma_k^{jj}\sum_{l=0}^{m-2}\kappa_i^{m-2-l}\kappa_j^l]u_{jji}^2\nonumber\\
&&+P_m(m-1)\sigma_k^{ii}\kappa_i^{m-2}u_{iii}^2\nonumber\\
&&-(m+1)\sigma_k^{ii}(\sum_{j\neq i}\kappa_j^{2m-2}u_{jji}^2+\kappa_i^{2m-2}u_{iii}^2+\dsum_{p\neq q}\kappa_p^{m-1}\kappa_q^{m-1}u_{ppi}u_{qqi})\nonumber.
\end{eqnarray}
Note that
\begin{eqnarray}\label{3.10}
\\
\kappa_j\sigma_k^{jj,ii}+\sigma_k^{jj}
&=&(\lambda_j+K_0)\sigma_k^{jj,ii}+\sigma_k^{jj} \nonumber\\
&=&K_0\sigma_k^{jj,ii}+\sigma_k^{ii}-\sigma_{k-1}(\lambda|ij)+\lambda_i\sigma_{k-2}(\lambda|ij)+\sigma_{k-1}(\lambda|ij) \nonumber\\
&=&(K_0+\lambda_i)\sigma_k^{jj,ii}+\sigma_k^{ii} \nonumber\\
&\geq& \sigma_k^{ii}.\nonumber
\end{eqnarray}
For any index  $j\neq i$, using the above inequality, we have,
\begin{eqnarray}\label{3.11}
&&P_m[2\kappa_j^{m-1}\sigma_k^{jj,ii}+(m-1)\kappa_j^{m-2}\sigma_k^{ii}
+2\sigma_k^{jj}\sum_{l=0}^{m-2}\kappa_i^{m-2-l}\kappa_j^l]u_{jji}^2\\
&&-(m+1)\sigma_k^{ii}\kappa_j^{2m-2}u_{jji}^2\nonumber\\
&\geq&P_m(m+1)\sigma_k^{ii}\kappa_j^{m-2}u_{jji}^2
-(m+1)\sigma_k^{ii}\kappa_j^{2m-2}u_{jji}^2\nonumber\\
&&+2P_m\sigma_k^{jj}(\sum_{l=0}^{m-3}\kappa_i^{m-2-l}\kappa_j^l)u_{jji}^2\nonumber\\
&\geq&(m+1)(P_m-\kappa_j^m)\sigma_k^{ii}\kappa_j^{m-2}u_{jji}^2+2P_m\sigma_k^{jj}(\sum_{l=0}^{m-3}\kappa_i^{m-2-l}\kappa_j^l)u_{jji}^2\nonumber
\end{eqnarray}
Using Cauchy-Schwarz inequalities, we have,
\begin{eqnarray}\label{3.12}
&&2\sum_{j\neq i}\sum_{p\neq i,j}\kappa_j^{m-2}\kappa_p^{m}u_{jji}^2\\
&=&\sum_{p\neq i}\sum_{q\neq i,p}\kappa_p^{m-2}\kappa_q^{m}u_{ppi}^2
+\sum_{q\neq i}\sum_{p\neq i,q}\kappa_q^{m-2}\kappa_p^{m}u_{qqi}^2\nonumber\\
&\geq& 2\dsum_{p\neq q;p,q\neq i}\kappa_p^{m-1}\kappa_q^{m-1}u_{ppi}u_{qqi}\nonumber.
\end{eqnarray}
Hence, by \eqref{3.9}, \eqref{3.11} and \eqref{3.12}, we obtain,
\begin{eqnarray}\label{3.13}
&&P_m^2(B_i+C_i+D_i-(1+\frac{1}{m})E_i)\\
&\geq&\sum_{j\neq i}(m+1)\kappa_i^m\kappa_j^{m-2}\sigma_k^{ii}u_{jji}^2+((m-1)(P_m-\kappa_i^m)
-2\kappa_i^m)\kappa_i^{m-2}\sigma_k^{ii}u_{iii}^2\nonumber\\
&&-2(m+1)\sigma_k^{ii}\kappa_i^{m-1}u_{iii}\sum_{j\neq i}\kappa_j^{m-1}u_{jji}+2P_m\sum_{j\neq i}\sigma_k^{jj}(\sum_{l=0}^{m-3}\kappa_i^{m-2-l}\kappa_j^l)u_{jji}^2\nonumber\\
&\geq&\sum_{j\neq i}[(m+1)\kappa_i^m\kappa_j^{m-2}\sigma_k^{ii}+2\kappa_1^m\sigma_k^{jj}\sum_{l=0}^{m-3}\kappa_i^{m-2-l}\kappa_j^{l}]u_{jji}^2\nonumber\\&&+((m-1)(P_m-\kappa_i^m)
-2\kappa_i^m)\kappa_i^{m-2}\sigma_k^{ii}u_{iii}^2-2(m+1)\sigma_k^{ii}\kappa_i^{m-1}u_{iii}\sum_{j\neq i}\kappa_j^{m-1}u_{jji}\nonumber.
\end{eqnarray}
We divide two cases to discuss.

\noindent Case(A) For $\la_j\geq \la_i$, we divide into two sub cases to discuss.  If  $\la_i\geq K_0$, for $1\leq l\leq m-3$, we have,
\begin{eqnarray}
2\kappa_1^m\sigma_k^{jj}\kappa_i^{m-2-l}\kappa_j^{l}
&=&2\kappa_1^m(\la_i\sigma_k^{ii,jj}+\sigma_{k-1}(\la|ij))\kappa_i^{m-2-l}\kappa_j^{l}\\
&\geq&\kappa_1^m(\kappa_i\sigma_k^{ii,jj}+\sigma_{k-1}(\la|ij))\kappa_i^{m-2-l}\kappa_j^{l}\nonumber\\
&\geq&\kappa_1^m(\kappa_j\sigma_k^{ii,jj}+\sigma_{k-1}(\la|ij))\kappa_i^{m-l-1}\kappa_j^{l-1}\nonumber\\
&\geq&\kappa_1^m(\la_j\sigma_k^{ii,jj}+\sigma_{k-1}(\la|ij))\kappa_i^{m-l-1}\kappa_j^{l-1}\nonumber\\
&=&\kappa_1^m\kappa_i^{m-1-l}\kappa_j^{l-1}\sigma_k^{ii}\nonumber.
\end{eqnarray}
Here, we have used $\sigma_{k-1}(\la|ij)>0$ since $u$ is a $k+1$ convex solution.

If $\lambda_i<K_0$,
for all $k\leq l\leq  k+8$, we have, $$\kappa_1^{l+1}\sigma_k^{jj}\geq \kappa_1^{l}\la_1\sigma_k^{11}\geq c_0\sigma_k\kappa_1\la_1^{l-1}\geq \sigma_k^{ii}$$
when $\lambda_1$ is sufficiently large. Here, we have used $\la_1\sigma_k^{11}\geq c_0\sigma_k.$ Hence, we have,
\begin{eqnarray}
\kappa_1^m\sigma_k^{jj}\kappa_i^{m-2-l}\kappa_j^{l}&\geq& \kappa_1^{l+1}\sigma_k^{jj}\kappa_1^{m-l-2}\kappa_j^l\kappa_i^{m-l-2}\kappa_1\geq \sigma_k^{ii}\kappa_j^{m-2}\kappa_i^m\frac{\kappa_1}{\kappa_i^{l+2}}\\&\geq& \sigma_k^{ii}\kappa_j^{m-2}\kappa_i^m.\nonumber
\end{eqnarray}
 Since $\lambda_i<K_0$, we have used $\kappa_1\geq \kappa_i^{l+2}$ for sufficiently large $\la_1$.

\noindent Case (B)
For $\la_j<\la_i$, obviously, we have,
 $$2\kappa_1^m\sigma_k^{jj}\kappa_i^{m-2-l}\kappa_j^{l}
\geq 2\kappa_1^m\kappa_i^{m-2-l}\kappa_j^{l}\sigma_k^{ii}.$$
Combing the above two cases, we get, for $k\leq l\leq k+8$,
\begin{eqnarray}
2\kappa_1^m\sigma_k^{jj}\kappa_i^{m-2-l}\kappa_j^{l}
&\geq&\kappa_i^m\kappa_j^{m-2}\sigma_k^{ii}.
\end{eqnarray}
Thus, \eqref{3.13} becomes,
\begin{eqnarray}\label{3.17}
&&P_m^2(B_i+C_i+D_i-(1+\frac{1}{m})E_i)\\
&\geq&\sum_{j\neq i}(m+8)\kappa_i^m\kappa_j^{m-2}\sigma_k^{ii}u_{jji}^2+((m-1)(P_m-\kappa_i^m)
-2\kappa_i^m)\kappa_i^{m-2}\sigma_k^{ii}u_{iii}^2\nonumber\\&&-2(m+1)\sigma_k^{ii}\kappa_i^{m-1}u_{iii}\sum_{j\neq i}\kappa_j^{m-1}u_{jji}\nonumber\\
&\geq&(m+8)\kappa_i^m\kappa_1^{m-2}\sigma_k^{ii}u_{11i}^2+((m-1)\kappa_1^m
-2\kappa_i^m)\kappa_i^{m-2}\sigma_k^{ii}u_{iii}^2\nonumber\\&&-2(m+1)\sigma_k^{ii}\kappa_i^{m-1}u_{iii}\kappa_1^{m-1}u_{11i}\nonumber\\
&\geq&(m+8)\kappa_i^m\kappa_1^{m-2}\sigma_k^{ii}u_{11i}^2+(m-3)\kappa_1^m\kappa_i^{m-2}\sigma_k^{ii}u_{iii}^2\nonumber\\&&-2(m+1)\sigma_k^{ii}\kappa_i^{m-1}u_{iii}\kappa_1^{m-1}u_{11i}\nonumber\\
&\geq&0.\nonumber
\end{eqnarray}
Here, we have used, for $m\geq 10$, $$(m+8)(m-3)\geq (m+1)^2.$$ So,
we take $$m=\max\{10, k+11\},$$ which is sufficiently large.
\end{proof}

The left case is $i=1$. Let's begin with the following Lemma which is modified from \cite{GRW}.

\begin{lemm}\label{lemma2}
For $\mu=1,\cdots, k-1$, if there exists  some positive constant $\delta\leq 1 $, such that $\la_{\mu}/\la_1\geq \delta$. Then there exits two sufficiently small positive constants
$\eta, \delta'$ depending on $\delta$,  such that, if  $\la_{\mu+1}/\la_1\leq \delta'$,  we have,
$$A_1+B_1+C_1+D_1-(1+\frac{\eta}{m})E_1\geq 0.$$
\end{lemm}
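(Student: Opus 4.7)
The plan is to extend the argument of Lemma \ref{lemma8} to the diagonal case $i=1$, where the $\kappa_1^m$-boost that drives that proof is unavailable. Using the gap hypothesis I partition the indices into $L=\{1,\ldots,\mu\}$ (``large'' eigenvalues, for which $\kappa_j/\kappa_1 \geq \delta/2$ once $\lambda_1$ is large enough) and $S=\{\mu+1,\ldots,n\}$ (``small'' eigenvalues, with $\kappa_j/\kappa_1 \leq 2\delta'$), then handle each piece in a different way.

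The starting point is the sharp Cauchy--Schwarz identity
\[
\Bigl(\sum_j \kappa_j^{m-1} u_{jj1}\Bigr)^{\!2}
= P_m \sum_j \kappa_j^{m-2} u_{jj1}^2
- \tfrac{1}{2}\sum_{i,j}\kappa_i^{m-2}\kappa_j^{m-2}\bigl(\kappa_i u_{jj1}-\kappa_j u_{ii1}\bigr)^{2},
\]
which gives $E_1 = \tfrac{m}{m-1}C_1 - G_1$ with a manifestly nonnegative defect $G_1$. The target inequality becomes
\[
A_1 + B_1 + D_1 + (1+\tfrac{\eta}{m})\,G_1 \;\geq\; \tfrac{1+\eta}{m-1}\,C_1.
\]
I split $C_1=C_1^L+C_1^S$ and attack the two pieces separately. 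For $C_1^S$: the $(1,j)$-pair with $j\in S$ inside $D_1$ contributes $\sim \sigma_k^{jj}\kappa_1^{m-2}u_{jj1}^{2}/P_m$ (since the Newton divided difference $(\kappa_1^{m-1}-\kappa_j^{m-1})/(\kappa_1-\kappa_j)\approx \kappa_1^{m-2}$ once $\kappa_j\ll\kappa_1$), and the matching $(1,j)$-pair in $G_1$ contributes $\sim\kappa_1^m \kappa_j^{m-2} u_{jj1}^2/P_m^2$. With $\delta'$ small and $P_m\geq\kappa_1^m$, both comfortably dominate the $j$-term of $\tfrac{1+\eta}{m-1}C_1^S$. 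For $C_1^L$: I combine the positivity of $A_1$ from \eqref{3.8}, of $B_1$ (valid because $k{+}1$-convexity gives $\sigma_{k-2}(\lambda|1j)>0$), and of the $L\times L$ portion of $G_1$, exploiting the comparability $\kappa_j\asymp \kappa_1$ on $L$.

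The main obstacle is the extremal Cauchy--Schwarz configuration $u_{jj1}\propto \kappa_j$ on $L$, in which the $L\times L$ defect $G_1$ vanishes and the entire $L$-side balance must be closed by $A_1$ and $B_1$ alone. In that situation the inequality reduces to an algebraic comparison between $\sigma_k\kappa_1^{m-1}/P_m$ and $\sigma_k^{11}$, which is handled by Newton's inequality $\sigma_k \geq \lambda_1\sigma_k^{11}/k$ together with the bound $P_m\lesssim \mu\kappa_1^m$ coming from the $\delta$-comparability on $L$. This forces $\eta$ to be taken small depending on $\delta$ (and $\mu$), but the choice of constants can still be made consistently: fix $\delta'$ small enough to produce a uniform positive margin on the $S$-side, then tune $\eta$ to absorb the remaining $\eta/m$-order deficit on the $L$-side.
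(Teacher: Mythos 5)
The Lagrange identity $E_1=\tfrac{m}{m-1}C_1-G_1$ you derive is correct, and the reduction to $A_1+B_1+D_1+(1+\tfrac{\eta}{m})G_1\geq\tfrac{1+\eta}{m-1}C_1$ is a valid reorganization (the paper reaches an equivalent state through the elementary manipulations leading to (3.19)). The $S$-side handling via the $(1,j)$-contributions of $D_1$ and $G_1$ is also plausible. But the crucial $L$-side step has a genuine gap: you are trying to close it with the positivity bound \eqref{3.8} (which is Lemma~\ref{Guan} applied with $l=1$) plus a crude Newton inequality, and that combination is quantitatively too weak when $\mu\geq 2$.

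Concretely, \eqref{3.8} gives $A_1\geq\tfrac{\kappa_1^{m-1}}{P_m}\sigma_k(1+\tfrac{\alpha}{2})\bigl[\tfrac{(\sigma_1)_1}{\sigma_1}\bigr]^2$. In the extremal $L$-configuration you identify, to beat $\tfrac{1+\eta}{m-1}C_1^L$ one needs roughly
\[
\frac{\sigma_k(1+\tfrac{\alpha}{2})}{\sigma_1^2}\;\geq\;(1+\eta)\,\frac{\sigma_k^{11}}{\kappa_1},
\]
and with $\sigma_k\leq\ $ (at best) $\ \sigma_k$, $\sigma_k\geq\lambda_1\sigma_k^{11}$ (the sharp fact for $\Gamma_{k+1}$; your $\sigma_k\geq\lambda_1\sigma_k^{11}/k$ is weaker and only loses a further factor of $k$), and $\sigma_1\geq\lambda_1\bigl(1+(\mu-1)\delta\bigr)$, this reduces to $\tfrac{1+\alpha/2}{(1+(\mu-1)\delta)^2}\geq 1+\eta$. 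For $\mu\geq 2$ this is \emph{false} unless $\delta$ is small; but $\delta$ is given by the hypothesis and in the application (Corollary~\ref{cor4}) the induction starts with $\delta_1=1/2$, so $\delta$ is not small. Moreover $\alpha=\tfrac{1}{k-1}$, so $1+\tfrac{\alpha}{2}$ is barely above $1$, while $(1+(\mu-1)\delta)^2$ can be, e.g., $(1.5)^2=2.25$. The ``$D_1^L$ rescue'' you allude to doesn't survive the worst configuration either (take $u_{jj1}=0$ for $j\neq 1$ with $\kappa_j\asymp\kappa_1$ on $L$: $D_1^L=0$, $G_1^{LL}$ is present but does not recover the lost factor, and one is back to the $A_1$ comparison above).

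What makes the paper's proof work is precisely that it applies Lemma~\ref{Guan} with $l=\mu$ (not $l=1$) in \eqref{3.20}, and then spends the whole block \eqref{3.24}--\eqref{3.33} — Newton's inequality $\sigma_{\mu-1}^2\geq\sigma_\mu\sigma_{\mu-2}$ on the complementary minors, the $\Gamma_{\mu+2}$-cone estimates \eqref{3.26}--\eqref{3.27}, and the $L$-versus-$S$ splitting of $\sigma_\mu^{aa}$ — to show that the $\sigma_\mu$-normalized quadratic form still produces a coefficient $\geq 1+\eta$ in front of $\kappa_1^{2m-2}\sigma_k^{11}\sum_{a\leq\mu}u_{aa1}^2$, uniformly in $\delta$ and $\mu$. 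That is the heart of the lemma and is not reproduced (or replaced) by anything in your outline. In short: your reformulation via the Cauchy--Schwarz defect is fine, but the essential analytic input — the $l=\mu$ instance of the concavity inequality \eqref{1.8} together with the $\Gamma_{\mu+2}$-estimates — is missing, and the substitutes you offer are strictly weaker and do not close the $L$-side balance.
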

\begin{proof}
At first, we have,
\begin{eqnarray}\label{3.18}
&&P_m^2(B_1+C_1+D_1-(1+\frac{\eta}{m})E_1)\\
&\geq&\sum_{j\neq 1}((1-\eta)P_m+(m+\eta)\kappa_1^m)\kappa_j^{m-2}\sigma_k^{11}u_{jj1}^2\nonumber\\
&&+((m-1)(P_m-\kappa_1^m)
-(1+\eta)\kappa_1^m)\kappa_1^{m-2}\sigma_k^{11}u_{111}^2\nonumber\\
&&-2(m+\eta)\sigma_k^{11}\kappa_1^{m-1}u_{111}\sum_{j\neq 1}\kappa_j^{m-1}u_{jj1}
+2P_m\sum_{j\neq 1}\sigma_k^{jj}(\sum_{l=0}^{m-3}\kappa_1^{m-2-l}\kappa_j^l)u_{jj1}^2\nonumber.
\end{eqnarray}
Since $\sigma_k^{jj}\geq \sigma_k^{11}$ for any $j\neq 1$, for
$m\geq 5$, it is obvious,
$$2P_m\sum_{j\neq 1}\sigma_k^{jj}(\sum_{l=0}^{m-3}\kappa_1^{m-2-l}\kappa_j^l)u_{jj1}^2\geq  3\sum_{j\neq 1}\kappa_1^m\kappa_j^{m-2}\sigma_k^{11}u_{jj1}^2+2P_m\kappa_1^{m-2}\dsum_{j\neq 1}\sigma_k^{jj}u_{jj1}^2.
$$
Hence, by \eqref{3.18}, we obtain,
\begin{eqnarray}\label{3.19}
&&P_m^2(B_1+C_1+D_1-(1+\frac{\eta}{m})E_1)\\
&\geq&\sum_{j\neq 1}(m+4)\kappa_1^m\kappa_j^{m-2}\sigma_k^{11}u_{jj1}^2+(m-1)\sum_{j\neq 1}\kappa_j^m\kappa_1^{m-2}\sigma_k^{11}u_{111}^2\nonumber\\
&&-2(m+\eta)\sigma_k^{11}\kappa_1^{m-1}u_{111}\dsum_{j\neq 1}\kappa_j^{m-1}u_{jj1}\nonumber\\
&&-(1+\eta)\kappa_1^{2m-2}\sigma_k^{11}u_{111}^2
+2P_m\kappa_1^{m-2}\dsum_{j\neq 1}\sigma_k^{jj}u_{jj1}^2\nonumber\\
&\geq&-(1+\eta)\kappa_1^{2m-2}\sigma_k^{11}u_{111}^2
+2P_m\kappa_1^{m-2}\dsum_{j\neq 1}\sigma_k^{jj}u_{jj1}^2\nonumber.
\end{eqnarray}
Here, we have used $$(m+4)(m-1)\geq (m+1)^2,$$ for $m\geq 5$.
By Lemma \ref{Guan}, we have,
\begin{eqnarray}\label{3.20}
A_1&\geq &\frac{\kappa_1^{m-1}}{P_m}[\sigma_k(1+\frac{\alpha}{2})\frac{(\sigma_{\mu})^2_1}{\sigma^2_{\mu}}-\frac{\sigma_k}{\sigma_{\mu}}\sigma_{\mu}^{pp,qq}u_{pp1}u_{qq1}]\\
&\geq&\frac{\kappa_1^{m-1}\sigma_k}{P_m\sigma_{\mu}^2}[(1+\frac{\alpha}{2})\sum_{a}(\sigma_{\mu}^{aa}u_{aa1})^2+\frac{\alpha}{2}\sum_{a\neq b}\sigma_{\mu}^{aa}\sigma_{\mu}^{bb}u_{aa1}u_{bb1}\nonumber\\&&+\sum_{a\neq b}(\sigma_{\mu}^{aa}\sigma_{\mu}^{bb}-\sigma_{\mu}\sigma_{\mu}^{aa,bb})u_{aa1}u_{bb1}].\nonumber
\end{eqnarray}
For $\mu=1$,  notice that $\sigma_1^{aa}=1$ and $\sigma_1^{aa,bb}=0$. Then, we have,
\begin{eqnarray}
(1+\frac{\alpha}{2})\sum_{a,b} u_{aa1}u_{bb1}
&\geq&2(1+\frac{\alpha}{2})\sum_{a\neq 1}u_{aa1}u_{111}+(1+\frac{\alpha}{2})u_{111}^2\\
&\geq&(1+\frac{\alpha}{4})u_{111}^2-C_{\alpha}\sum_{a\neq 1}u_{aa1}^2\nonumber.
\end{eqnarray}
Then, we get,
\begin{eqnarray}\label{3.22}
P_m^2A_1&\geq&\frac{P_m\kappa_1^{m-1}\sigma_k}{\sigma_1^2}(1+\frac{\alpha}{4})u^2_{111}
-\frac{\kappa_1^{m-1} P_mC_{\alpha}}{\sigma_1^2}\sum_{a\neq 1}u_{aa1}^2\\
&\geq&\frac{P_m\kappa_1^{m-2}\sigma_k^{11}}{(1+\sum_{j\neq
1}\lambda_j/\lambda_1)^2}(1+\frac{\alpha}{4})u^2_{111}
-\frac{C_{\alpha} P_m\kappa_1^{m-1}}{\sigma_1^2}\sum_{a\neq 1}u_{aa1}^2\nonumber\\
&\geq&(1+\eta)P_m\kappa_1^{m-2}\sigma_k^{11}u_{111}^2
-\frac{C_{\alpha} P_m\kappa_1^{m-1}}{\sigma_1^2}\sum_{a\neq
1}u_{aa1}^2.\nonumber
\end{eqnarray}
The last two inequalities come from,
$$\sigma_k\geq \la_1\sigma_k^{11},$$ for sufficiently large $\la_1$, and
\begin{eqnarray}
1+\frac{\alpha}{4}&\geq& (1+\eta)(1+(n-1)\delta')^2.
\end{eqnarray}
For $\mu\geq 2$, obviously, for $a\neq b$, we have,
\begin{eqnarray}\label{3.24}
&&\ \ \ \ \ \sigma_{\mu}^{aa}\sigma_{\mu}^{bb}-\sigma_{\mu}\sigma_{\mu}^{aa,bb}\\
&=&(\la_b\sigma_{\mu-2}(\la|ab)+\sigma_{\mu-1}(\la|ab))(\la_a\sigma_{\mu-2}(\la|ab)+\sigma_{\mu-1}(\la|ab))\nonumber\\
&&-(\la_a\la_b\sigma_{\mu-2}(\la|ab)+\la_a\sigma_{\mu-1}(\la|ab)+\la_b\sigma_{\mu-1}(\la|ab)+\sigma_{\mu}(\la|ab))\sigma_{\mu-2}(\la|ab) \nonumber \\
&=&\sigma_{\mu-1}^2(\la|ab)-\sigma_{\mu}(\la|ab)\sigma_{\mu-2}(\la|ab)\nonumber\\
&\geq &0.\nonumber
\end{eqnarray}
The last inequality comes from Newton inequality.  Since $u\in
\Gamma_{k+1}\subset \Gamma_{\mu+2}$, we have, for any $a\leq \mu$,
\begin{equation}\label{3.25}
\sigma_\mu^{aa}\geq \frac{\la_1\cdots\la_{\mu}}{\la_a}.
\end{equation} For $a,b\leq \mu$, we claim,
 \begin{eqnarray}\label{3.26}
 &&\sigma_{\mu-1}(\la|ab)\leq C\frac{\la_1\cdots\la_{\mu+1}}{\la_a\la_b},\ \  \sigma_{\mu}(\la|ab)\leq C\frac{\la_1\cdots\la_{\mu+2}}{\la_a\la_b}\\
 &&\sigma_{\mu-2}(\la|ab)\leq C\frac{\la_1\cdots \la_{\mu}}{\la_a\la_b}.\nonumber
 \end{eqnarray}
The proof of the above three inequalities are same. We only give
more detail for the first one. Since, $u\in \Gamma_{\mu+2}$, then,
for any index $i\geq \mu+1$, there is some constant $C$ such that,
$$|\la_i|\leq C\la_{\mu+1}.$$ We write down the expression of
$\sigma_{\mu}$ and replace any $\la_i$ for $i\geq \mu+1$ by
$\la_{\mu+1}$, then we obtain the first inequality.  Using
\eqref{3.26} and \eqref{3.25}, we get, for $a,b\leq \mu$,
\begin{equation}\label{3.27}
\sigma_{\mu-1}^2(\la|ab)-\sigma_{\mu}(\la|ab)\sigma_{\mu-2}(\la|ab)\leq
C_1(\frac{\la_{\mu+1}}{\la_b}\sigma_{\mu}^{aa})^2.
\end{equation}
Then, by (\ref{3.27}), we have, for any undetermined positive
constant $\epsilon$,
\begin{eqnarray}\label{3.28}
&&\sum_{a\neq b; a,b\leq \mu}(\sigma_{\mu}^{aa}\sigma_{\mu}^{bb}-\sigma_{\mu}\sigma_{\mu}^{aa,bb})u_{aa1}u_{bb1}\\
&\geq &-\sum_{a\neq b; a,b\leq \mu}(\sigma_{\mu-1}^2(\la|ab)-\sigma_{\mu}(\la|ab)\sigma_{\mu-2}(\la|ab))u_{aa1}^2\nonumber\\
&\geq&-\sum_{a\neq b;a,b\leq \mu}C_1(\frac{\la_{\mu+1}}{\la_b})^2(\sigma_{\mu}^{aa}u_{aa1})^2\nonumber\\
&\geq&-\frac{C_2}{\delta^2}(\frac{\la_{\mu+1}}{\la_1})^2\sum_{a\leq\mu}(\sigma^{aa}_{\mu}u_{aa1})^2\ \ \geq \ \ -\epsilon\sum_{a\leq\mu}(\sigma_{\mu}^{aa}u_{aa1})^2\nonumber.
\end{eqnarray}
Here, we choose a sufficiently small $\delta'$, such that,
\begin{eqnarray}\label{3.29}
\delta'&\leq& \delta\sqrt{\epsilon/C_2}.
\end{eqnarray}
By (\ref{3.27}), we also have,
\begin{eqnarray}\label{3.30}
&&2\sum_{a\leq \mu; b> \mu}(\sigma_{\mu}^{aa}\sigma_{\mu}^{bb}-\sigma_{\mu}\sigma_{\mu}^{aa,bb})u_{aa1}u_{bb1}\\
&\geq &-2\sum_{a\leq \mu; b> \mu}\sigma_{\mu}^{aa}\sigma_{\mu}^{bb}|u_{aa1}u_{bb1}|\nonumber\\
&\geq&-\epsilon\sum_{a\leq \mu; b>\mu}(\sigma_{\mu}^{aa}u_{aa1})^2-\frac{1}{\epsilon}\sum_{a\leq \mu; b>\mu}(\sigma_{\mu}^{bb}u_{bb1})^2\nonumber.
\end{eqnarray}
Again by (\ref{3.27}), we have,
\begin{eqnarray}\label{3.31}
\sum_{a\neq b; a,b> \mu}(\sigma_{\mu}^{aa}\sigma_{\mu}^{bb}-\sigma_{\mu}\sigma_{\mu}^{aa,bb})u_{aa1}u_{bb1}&\geq& -\sum_{a\neq b; a,b>\mu}\sigma_{\mu}^{aa}\sigma_{\mu}^{bb}|u_{aa1}u_{bb1}|\\
&\geq &-\sum_{a\neq b; a,b>\mu}(\sigma_{\mu}^{aa}u_{aa1})^2.\nonumber
\end{eqnarray}
Hence, combing (\ref{3.20}), (\ref{3.28}), (\ref{3.30}) and
(\ref{3.31}), then taking $\alpha=0$ in (\ref{3.20}), we get,
\begin{eqnarray}
A_1
&\geq&\frac{\kappa_1^{m-1}\sigma_k}{P_m\sigma_{\mu}^2}[(1-2\epsilon)\sum_{a\leq \mu}(\sigma_{\mu}^{aa}u_{aa1})^2-C_{\epsilon}\sum_{a>\mu}(\sigma_{\mu}^{aa}u_{aa1})^2].
\end{eqnarray}
For $a>\mu$, we have, $$\sigma_{\mu}^{aa}\leq C\la_1\cdots\la_{\mu-1},\ \ \text{ and } \sigma_{\mu}\geq \la_1\cdots\la_{\mu}.$$
For $a\leq \mu$, we have, $$\sigma_{\mu}(\la|a)\leq C\frac{\la_1\cdots\la_{\mu+1}}{\la_a}$$
Then, we have, for  $\la_1\geq K_0$,
\begin{eqnarray}\label{3.33}
\\
&&P_m^2A_1\nonumber\\&\geq&\frac{P_m\kappa^{m-1}_1\la_1\sigma^{11}_k}{\sigma_{\mu}^2}(1-2\epsilon)\sum_{a\leq \mu}(\sigma_{\mu}^{aa}u_{aa1})^2-\frac{P_m\kappa_1^{m-1}\sigma_k C_{\epsilon}}{\sigma_{\mu}^2}\sum_{a>{\mu}}(\sigma_{\mu}^{aa}u_{aa1})^2\nonumber\\
&\geq&\frac{P_m\kappa^{m-1}_1\sigma_k^{11}}{\la_1}(1-2\epsilon)\sum_{a\leq \mu}(\frac{\la_a\sigma_{\mu}^{aa}}{\sigma_{\mu}})^2u_{aa1}^2-\frac{P_m\kappa_1^{m-3}\la_1^2 C_{\epsilon}}{\sigma_{\mu}^2}\sum_{a>\mu}(\sigma_{\mu}^{aa}u_{aa1})^2\nonumber\\
&\geq&\kappa^{2m-2}_1\sigma_k^{11}(1-2\epsilon)(1+\delta^m)\sum_{a\leq \mu}(1-\frac{C_3\la_{\mu+1}}{\la_a})^2u_{aa1}^2-\frac{P_m\kappa_1^{m-3}\la_{\mu}^2 C_{\epsilon}}{\delta^2\sigma_{\mu}^2}\sum_{a>\mu}(\sigma_{\mu}^{aa}u_{aa1})^2\nonumber\\
&\geq&\kappa^{2m-2}_1\sigma_k^{11}(1-2\epsilon)(1+\delta^m)(1-\frac{C_3\la_{\mu+1}}{\delta\la_1})^2
\sum_{a\leq \mu}u_{aa1}^2-\frac{P_m\kappa_1^{m-3} C_{\epsilon}}{\delta^2}\sum_{a>\mu}u_{aa1}^2\nonumber\\
&\geq&(1+\eta)\kappa_1^{2m-2}\sigma_k^{11}\sum_{a\leq \mu}u_{aa1}^2-\frac{P_m\kappa_1^{m-3} C_{\epsilon}}{\delta^2}\sum_{a>\mu}u_{aa1}^2.\nonumber
\end{eqnarray}
Here, the last inequality comes from that we choose $\delta',\eta$ and $\epsilon$ satisfying
\begin{eqnarray}
\delta'C_3\leq 2\epsilon \delta ,\ \  \   (1-2\epsilon)^2(1+\delta^m)\geq 1+\eta .
\end{eqnarray}

 Using  (\ref{3.19}) and (\ref{3.22}) or (\ref{3.33}),  we have,
\begin{eqnarray}\label{3.35}
&&P_m^2(A_1+B_1+C_1+D_1-(1+\frac{\eta}{m})E_1)\\
&\geq& 2P_m\kappa_1^{m-2}\sum_{j\neq 1}\sigma_k^{jj}u_{jj1}^2
-\dfrac{C_{\epsilon}P_m\kappa_1^{m-3}}{\delta^2}\sum_{j>\mu}u^2_{jj1}.\nonumber
\end{eqnarray}
Now, for $k\geq j> \mu$, we have,
\begin{eqnarray}
\kappa_1\sigma_{k-1}(\kappa|j)
\geq\frac{\la_1\cdots\la_k\cdot\kappa_1}{\la_j}
\geq\frac{\sigma_k\la_1}{C_4\la_j}\geq \frac{\sigma_k}{C_4\delta'}.\nonumber
\end{eqnarray}
For $j\geq k+1$, we have,
\begin{eqnarray}
\kappa_1\sigma_{k-1}(\kappa|j)
\geq\frac{\sigma_k\la_1}{C_4\la_k}\geq \frac{\sigma_k}{C_4\delta'}.\nonumber
\end{eqnarray}
For both cases, chose $\delta'$ small enough such that,
$$\delta'<\dfrac{\sigma_k\delta^2}{C_4C_{\epsilon}},$$ then
(\ref{3.35}) is nonnegative.  We complete the proof.
\end{proof}

Hence, a directly corollary of Lemma \ref{lemma8} and Lemma
\ref{lemma2} is the following.
\begin{coro}\label{cor4}
There exists two finite sequence of positive numbers $\{\delta_i\}_{i=1}^k$ and $\{\varepsilon_i\}_{i=1}^k$, such that, if the following inequality holds for some index $1\leq r \leq k-1$, $$\frac{\la_r}{\la_1} \ \ \geq \ \ \delta_r, \text{ and } \frac{\la_{r+1}}{\la_1}\leq \delta_{r+1},$$ then, for sufficiently large $K$, we have,
\begin{eqnarray}\label{3.36}
 A_1+B_1+C_1+D_1-(1+\dfrac{\varepsilon_r}{m})E_1 \geq 0.
\end{eqnarray}\end{coro}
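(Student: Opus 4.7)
The plan is to construct the sequences $\{\delta_i\}_{i=1}^k$ and $\{\varepsilon_i\}_{i=1}^k$ iteratively by feeding Lemma \ref{lemma2} back into itself. The key observation is that Lemma \ref{lemma2}, applied with index $\mu = r$ and any prescribed $\delta \in (0,1]$, produces positive constants $\eta = \eta(\delta,r)$ and $\delta' = \delta'(\delta,r)$ such that the inequality
\begin{equation*}
A_1+B_1+C_1+D_1-(1+\eta/m)E_1 \geq 0
\end{equation*}
holds whenever $\la_r/\la_1 \geq \delta$ and $\la_{r+1}/\la_1 \leq \delta'$. Combined with the unconditional bound of Lemma \ref{lemma8} for $i \neq 1$, this is everything needed; the corollary is merely a uniform restatement that freezes the constants in advance.

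First I would set $\delta_1 := 1$, which makes $\la_1/\la_1 \geq \delta_1$ automatic. Then I proceed by induction on $r$: having chosen $\delta_r$, apply Lemma \ref{lemma2} with $\mu = r$ and $\delta = \delta_r$ to obtain positive constants $\eta_r$ and $\delta'_r$, and define $\varepsilon_r := \eta_r$ together with $\delta_{r+1} := \delta'_r$. Running this for $r = 1,\ldots,k-1$ yields $\delta_1,\ldots,\delta_k$ and $\varepsilon_1,\ldots,\varepsilon_{k-1}$; one may take $\varepsilon_k$ to be any positive number to pad the sequences out to the stated length $k$. With these choices fixed, whenever the hypothesis $\la_r/\la_1 \geq \delta_r$ and $\la_{r+1}/\la_1 \leq \delta_{r+1}$ holds for some $1 \leq r \leq k-1$, this is by construction precisely the hypothesis of Lemma \ref{lemma2} with $(\mu,\delta,\delta') = (r,\delta_r,\delta_{r+1})$, so \eqref{3.36} follows at once.

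I do not anticipate any serious obstacle: the corollary is essentially a bookkeeping repackaging of Lemma \ref{lemma2} into a form convenient for the case analysis that will drive the main proof of Theorem \ref{theo2}. The only mild point to verify is that the iterative construction remains well-defined, i.e.\ that Lemma \ref{lemma2} indeed produces a positive $\delta'_r$ for every input $\delta_r \in (0,1]$; this is clear from the proof of Lemma \ref{lemma2}, where $\delta'$ is constrained only by finitely many strict upper bounds such as \eqref{3.29} and the final condition $\delta' < \sigma_k \delta^2/(C_4 C_\epsilon)$, each of which leaves a nonempty interval of admissible values. One should also recall that the constant $K$ entering the definition of $A_1$ must be chosen large (as in Lemma \ref{Guan}); this is a standing assumption inherited from Lemma \ref{lemma2}, independent of $r$.
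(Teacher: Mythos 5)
Your proposal is correct and follows essentially the same inductive construction as the paper: start at $r=1$, where $\la_1/\la_1 \geq \delta_1$ holds trivially, and at each step feed $\mu = r$, $\delta = \delta_r$ into Lemma~\ref{lemma2} to produce $\varepsilon_r := \eta$ and $\delta_{r+1} := \delta'$. The only cosmetic differences are your choice $\delta_1 = 1$ versus the paper's $\delta_1 = 1/2$ and your explicit remark about padding $\varepsilon_k$, neither of which affects the substance.
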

\begin{proof} We use induction to find the sequence $\{\delta_i\}_{i=1}^k$ and $\{\varepsilon_i\}_{i=1}^k$. Let $\delta_1=1/2$. Then $\la_1/\la_1=1>\delta_1$.  Assume that we have determined $\delta_r$ for $1\leq r\leq k-1$. We want to search for $\delta_{r+1}$.  In Lemma \ref{lemma2}, we may choose $\mu=r$ and $\delta=\delta_r$. Then there is some $\delta_{r+1}$ and $\varepsilon_r$ such that, if $\la_{r+1}\leq \delta_{r+1}\la_1$, we have \eqref{3.36}. We have $\delta_{r+1}$ and $\varepsilon_r$.
\end{proof}

Now, we continue to prove Theorem \ref{theo2}.

By Corollary \ref{cor4}, there exists some sequence $\{\delta_i\}_{i=1}^k$. We divide two cases to deal with.

\noindent Case(A):  $\la_k\geq \delta_k\la_1$. Then, obviously we have,
$$
f=\sigma_k>\la_1\cdots\la_k\geq \delta_k^{k-1}\la_1^k,
$$
which implies $\la_1\leq C$. Hence, we have proved Theorem \ref{theo2}.\\

\noindent Case(B): There exists some index $1\leq r\leq k-1$ such
that, $$\la_{r}\geq\delta_r\la_1\text{ and } \la_{r+1}\leq
\delta_{r+1}\la_1.$$ By Corollary \ref{cor4}, and Lemma \ref{lemma8}
we have,
$$\sum_{i=1}^n(A_i+B_i+C_i+D_i)-E_1-(1+\frac{1}{m})\sum_{i=2}^nE_i\geq
0.$$ Using the definitions of $A_i,B_i,C_i,D_i,E_i$ and
(\ref{3.7}), we have,
\begin{eqnarray}\label{3.37}
0&\geq&\frac{1}{P_m}\sum_l\kappa_l^{m-1}(-C-Cu_{11}^2-K\psi_{p_l}^2u_{ll}^2)
+\sum_{i=2}^n\frac{\sigma_k^{ii}}{P_m^2}(\sum_j
\kappa_j^{m-1}u_{jji})^2\\&& +N u_{ii}^2\sigma_k^{ii}
+\frac{k\sigma_k}{u}-\frac{\sigma_k^{ii}u_i^2}{u^2}-\sum_s\psi_{p_s}\dfrac{u_s}{u}.\nonumber
\end{eqnarray}
By \eqref{3.2}, we have, for  any fixed $i\geq 2$,
$$
-\frac{\sigma_k^{ii} u_i^2}{u^2} =-\frac{\sigma_k^{ii}}{
P_m^2}(\sum_j \kappa_j^{m-1}u_{jji})^2
+\sigma_{k}^{ii}N^2u_i^2u_{ii}^2+\frac{2N\sigma_{k}^{ii}u_i^2u_{ii}}{u}.
$$
Hence, \eqref{3.37} becomes,
\begin{eqnarray}\label{3.38}
0&\geq&-C(K)\la_1+\sum_{i=2}^n(\sigma_{k}^{ii}N^2u_i^2u_{ii}^2+\frac{2N\sigma_{k}^{ii}u_i^2u_{ii}}{u})\\&&
+N u_{ii}^2\sigma_k^{ii}
+\frac{k\sigma_k}{u}-\frac{\sigma_k^{11}u_1^2}{u^2}-\sum_s\psi_{p_s}\dfrac{
u_s}{u}.\nonumber
\end{eqnarray}
Since, there is some positive constant $c_0$ such that, $$u_{11}\sigma_k^{11}\geq c_0>0,$$ then we have,
\begin{eqnarray}\label{3.38}
0&\geq&(\frac{c_0N}{2}-C(K))\la_1+\sum_{i=2}^n\frac{2N\sigma_{k}u_i^2}{u}
+\frac{N}{2}\sigma_k^{11}\la_1^2
+\frac{k\sigma_k}{u}-\frac{\sigma_k^{11}u_1^2}{u^2}-\sum_s\psi_{p_s}\dfrac{u_s}{u}.\nonumber
\end{eqnarray}
Here, we have used $$\sigma_k=\la_i\sigma_k^{ii}+\sigma_k(\la|i)\geq
\la_i\sigma_k^{ii}.$$ Hence, we obtain, for $N\geq
\dfrac{4C(K)}{c_0}$,
\begin{eqnarray}\label{3.38}
-\frac{C}{u}+\frac{C\sigma_k^{11}}{u^2}&\geq&\frac{N}{4}\la_1
+\frac{N}{2}\sigma_k^{11}\la_1^2
\nonumber
\end{eqnarray}
If at maximum value point $p$, $-u\geq \sigma_k^{11}$,  the above
inequality becomes,
$$\frac{2C}{-u}\geq\frac{N}{4}\la_1,
$$
which implies our result. If $-u\leq \sigma_k^{11}$,  the inequality becomes,
$$\frac{2C\sigma_k^{11}}{(-u)^2}\geq\frac{N}{2}\sigma_k^{11}\la_1^2,
$$
 which also implies our result. We complete the proof of Theorem \ref{theo2}.

\section{A rigidity theorem for $k+1$ convex solutions}
In this section, we prove Theorem \ref{theo3}. At first we have the following Lemma.
\begin{lemm}\label{lemm10}
We consider  the  Dirichlet problem of the $k$-Hessian equations,
\begin{align}
\left\{\begin{matrix}\sigma_k(D^2u)&=&f(x)& {\rm in}~~ \Omega \label{4.1}\\
u&=&0   &{\rm on}~~ \pa\Omega\end{matrix}\right..
\end{align}
Here, $f$ is a smooth function defined in $\Omega$. For $k+1$ convex solutions, we have the following type of interior estimates,
\begin{eqnarray}\label{4.2}
(-u)^\beta\Delta u\leq C.
\end{eqnarray}
for sufficiently large $\beta>0$. Here constant  $C$ and $\beta$ only depends on the diameters of the domains $\Omega$ and $k$.
\end{lemm}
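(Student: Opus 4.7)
\bigskip
\noindent\textbf{Proof plan.} The key simplification relative to Theorem \ref{theo2} is that $f=f(x)$ does not depend on $u$ or $\nabla u$, so differentiating equation \eqref{4.1} yields
$$\sigma_k^{ii}u_{iij}=f_j, \qquad \sigma_k^{ii}u_{iijj}+\sigma_k^{pq,rs}u_{pqj}u_{rsj}=f_{jj},$$
with no $f_u u_j$ or $f_{p_s}u_{sjj}$ terms. The role of the auxiliary $(mN/2)|Du|^2$ factor in the test function of Theorem \ref{theo2} was precisely to absorb the contribution of $f_{p_s}u_{sjj}$; with that term gone, we can drop the gradient piece entirely. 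My plan is therefore to work with the test function
$$\varphi=\beta m\log(-u)+\log P_m,$$
where $P_m=\sum_j\kappa_j^m$, $\kappa_j=\lambda_j+K_0$ as in Section 3, and $\beta,m$ are large constants to be chosen. Taking exponentials, a bound $\varphi\le C$ becomes $(-u)^\beta\lambda_1\le C$, which is the desired conclusion up to adjusting $\beta$.

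At an interior maximum $x_0$ of $\varphi$, rotate coordinates so $(u_{ij})$ is diagonal with $\kappa_1\ge\cdots\ge\kappa_n$. The first-order critical equation now reads
$$\beta m\frac{u_i}{u}+\frac{\sum_j\kappa_j^{m-1}u_{jji}}{P_m}=0,$$
which plays the role previously shared between $\log(-u)$ and $|Du|^2$ terms. Differentiating $\varphi$ twice, contracting with $\sigma_k^{ii}$, and substituting the doubly-differentiated equation produces, just as in Section 3, an inequality whose third-order content is governed by the same quantities $A_i,B_i,C_i,D_i,E_i$. Since Lemma \ref{lemma8} and Corollary \ref{cor4} concern only the symmetric structure of $\sigma_k$ and not the specific form of $f$, they apply verbatim and yield
$$\sum_{i=1}^n(A_i+B_i+C_i+D_i)-E_1-\Big(1+\tfrac{1}{m}\Big)\sum_{i\ge 2}E_i\ge 0$$
in the nontrivial case where $\lambda_k<\delta_k\lambda_1$. (The alternative case $\lambda_k\ge\delta_k\lambda_1$ gives $\lambda_1\le C$ from the equation directly.)

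The remaining, and main, step is to show that after this absorption the surviving second- and zeroth-order terms still force $(-u)^\beta\lambda_1\le C$. The surviving positive terms are essentially $\beta m k\sigma_k/u$ (which is negative, hence bad) and whatever is left after using the first-order relation to eliminate $E_i$ in favor of $\sigma_k^{ii}u_i^2/u^2$ (as done after \eqref{3.37} in Theorem \ref{theo2}). The strategy is to exploit the large prefactor $\beta m$: the contribution $-\beta m\sigma_k^{ii}u_i^2/u^2$ coming from $(\log(-u))_{ii}$ is controlled, using the first-order relation, by $\beta m\,(\beta m)^{-2}$ times a third-order expression already absorbed, i.e. it costs only $O(1/(\beta m))$ of the good terms. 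Meanwhile the positive second-order term $\sigma_k^{11}\lambda_1^2$, which appeared in Theorem \ref{theo2} via $Nu_{ii}^2\sigma_k^{ii}$, must now be produced from the Newton-inequality expansion of $A_1$ in Lemma \ref{lemma2}; after the absorption, one retains a definite positive multiple of $\kappa_1^{2m-2}\sigma_k^{11}$ (essentially $\lambda_1^2$ times $\sigma_k^{11}$ at the outer scale), which together with $\sigma_k^{11}\lambda_1\ge c_0$ dominates the lower-order contributions once $\beta$ is chosen large depending only on $\mathrm{diam}(\Omega)$ and $k$ (and on bounds for $f$ and its derivatives through $K_0$).

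The main obstacle is the second paragraph of the last step: without the $|Du|^2$ cushion, I must verify that the $\beta m\log(-u)$ piece alone supplies enough quadratic material in $\lambda_1$ to dominate the negative $\beta m\sigma_k^{11}u_1^2/u^2$ and the new lower-order errors proportional to $C(K)\lambda_1$. Raising $\beta$ rebalances the two, at the cost of producing the higher power $(-u)^\beta$ rather than $(-u)$; this trade-off is exactly what the statement of the lemma allows. Once this balance is achieved, either $-u\ge \sigma_k^{11}$ or $-u\le\sigma_k^{11}$ as in the dichotomy at the end of Section 3 gives the claimed bound $(-u)^\beta\Delta u\le C$.
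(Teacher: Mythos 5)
\medskip

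\noindent\textbf{Review.} There is a genuine gap, and it lies exactly in the spot you flagged as the ``main obstacle.'' You propose dropping the cushion term from the test function on the grounds that $\frac{mN}{2}|Du|^2$ in Theorem~\ref{theo2} served only to absorb the $f_{p_s}u_{sjj}$ contribution. That is not its only role. The cushion has a second, indispensable role: its \emph{second} derivative, contracted against $\sigma_k^{ii}$, supplies the positive second-order quantity that ultimately forces the bound. In Theorem~\ref{theo2} this is $Nu_{ii}^2\sigma_k^{ii}$, which yields $\tfrac{N}{2}\sigma_k^{11}\lambda_1^2$; in the paper's actual proof of Lemma~\ref{lemm10} the cushion is switched from $\frac{mN}{2}|Du|^2$ to $\frac{m}{2}|x|^2$ (precisely because $f=f(x)$ makes the gradient cushion unnecessary for absorbing $f_{p_s}u_{sjj}$), and its second derivative contracted with $\sigma_k^{ii}$ produces $(n-k+1)\sigma_{k-1}$, which is the term carried to the final Newton--Maclaurin step.

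If you drop the cushion entirely, as in $\varphi=\beta m\log(-u)+\log P_m$, then after using the first-order condition to trade $-\beta\sigma_k^{ii}u_i^2/u^2$ for a small multiple of $E_i$ and invoking Lemma~\ref{lemma8} and Corollary~\ref{cor4}, the concluding inequality degenerates to $0\geq -C + \beta k/u$, which carries no information since $u<0$. Your claim that a surviving positive multiple of $\kappa_1^{2m-2}\sigma_k^{11}$ can be extracted from $A_1$ via Lemma~\ref{lemma2} is not correct: the conclusion of Lemma~\ref{lemma2} is the inequality $A_1+B_1+C_1+D_1-(1+\eta/m)E_1\geq 0$ with nothing to spare, and in its proof the positive part of $A_1$ (see \eqref{3.22} and \eqref{3.33}) is a multiple of $u_{111}^2$ (or $\sum_{a\leq\mu}u_{aa1}^2$), a third-order quantity that is exactly consumed in cancelling the $-(1+\eta)\kappa_1^{2m-2}\sigma_k^{11}u_{111}^2$ piece of $E_1$; it is not a free second-order term $\sigma_k^{11}\lambda_1^2$. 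The remedy — and the paper's actual route — is to keep a cushion but replace $|Du|^2$ by $|x|^2$; this costs nothing (since the domain is bounded) and restores the needed $(n-k+1)\sigma_{k-1}$ term, after which the Newton--Maclaurin step $\sigma_{k-1}\geq c\,\sigma_1^{1/(k-1)}\sigma_k^{(k-2)/(k-1)}$ closes the argument.
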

\begin{proof}
Obviously, for sufficiently large $a$ and $b$, the function $w=\dfrac{a}{2}|x|^2-b$ can control $u$ by comparison principal (see \cite{CNS3} for detail), namely,
$$w\leq u\leq 0.$$ Here $a,b$ depends on the diameter of the domain $\Omega$. Hence, in the following proof, the constant $\beta, C$ in \eqref{4.2} can contains $\sup_{\Omega}|u|$.

Since $u$ is a $k+1$ convex solution, by Lemma \ref{lemm 7}, there
is some constant $K_0>0$, such that $D^2 u+K_0I\geq 0$. We consider
the following test functions,
$$
\varphi=m\beta\log(-u)+\log P_m+\dfrac{m}{2}|x|^2.
$$
where $P_m=\dsum_j\kappa_j^m$, $\kappa_i=\la_i+K_0>0$. Suppose
$\varphi$
achieves its  maximal value at $x_0\in \Omega$. We may assume
$(u_{ij})$ is diagonal by rotating the coordinate and $u_{11}\geq
u_{22}\cdots\geq u_{nn}$. We always denote $u_{ii}=\la_i$.

\par
At the point $x_0$, we  differentiate the test function twice and using Lemma \ref{lemm D}. We have,
\begin{equation}\label{4.3}
\dfrac{\dsum_j\kappa_j^{m-1}u_{jji}}{P_m}+x_i+\dfrac{\beta
u_i}{u}=0,
\end{equation}
and,
\begin{eqnarray}\label{4.4}
0&\geq
&\dfrac{1}{P_m}[\dsum_j\kappa_j^{m-1}u_{jjii}+(m-1)\dsum_j\kappa_j^{m-2}u_{jji}^2
+\dsum_{p\neq q}\dfrac{\kappa_p^{m-1}-\kappa_q^{m-1}}{\kappa_p-\kappa_q}u_{pqi}^2] \\
&&-\dfrac{m}{P_m^2}(\dsum_j\kappa_j^{m-1}u_{jji})^2 +\dfrac{\beta
u_{ii}}{u}-\dfrac{\beta u_i^2}{u^2}+1.\nonumber
\end{eqnarray}
\par
Differentating the equation (\ref{4.1}) twice at $x_0$, we have,
\begin{equation}\label{4.5}
(\sigma_k)_j=\sigma_k^{ii}u_{iij}=f_j,
\end{equation}
and
\begin{equation}\label{4.6}
\sigma_k^{ii}u_{iijj}+\sigma_k^{pq,rs}u_{pqj}u_{rsj} =f_{jj},
\end{equation}
Then, contracting $\sigma_k^{ii}$ in \eqref{4.4} and using the previous two equalities, we have,
\begin{eqnarray}\label{4.7}
0&\geq&\dfrac{1}{P_m}[\sum_l\kappa_l^{m-1}(f_{ll}-\sigma_k^{pq,rs}u_{pql}u_{rsl})
+(m-1)\sigma_k^{ii}\sum_j\kappa_j^{m-2}u_{jji}^2\\
&&+\sigma_k^{ii}\sum_{p\neq q}\frac{\kappa_p^{m-1}-\kappa_q^{m-1}}{\kappa_p-\kappa_q}u_{pqi}^2]-\frac{m\sigma_k^{ii}}{P_m^2}(\dsum_j \kappa_j^{m-1}u_{jji})^2\nonumber\\
&&+\dfrac{\beta k}{u}-\dfrac{\beta
\sigma_k^{ii}u_i^2}{u^2}+(n-k+1)\sigma_{k-1}. \nonumber
\end{eqnarray}
Using (\ref{4.3}), we have,
\begin{align*}
-\dfrac{\beta \sigma_k^{ii}u_i^2}{u^2}\geq
-\dfrac{2\sigma_k^{ii}}{\beta}\dfrac{(\dsum_j\kappa_j^{m-1}u_{jji})^2}{P_m^2}-2\dfrac{\sigma_k^{ii}x_i^2}{\beta}
\end{align*}
Note that,
$$
-\sigma_k^{pq,rs}u_{pql}u_{rsl}=-\sigma_k^{pp,qq}u_{ppl}u_{qql}+\sigma_k^{pp,qq}u_{pql}^2.
$$
For sequence $\{\varepsilon_i\}_{i=1}^k$ appears in Corollary
\ref{cor4}, Let
$$\varepsilon_{\beta}=\dfrac{2}{\beta}<\min\{\frac{1}{10},\varepsilon_1\cdots,\varepsilon_k\},$$
then, (\ref{4.7}) becomes
\begin{eqnarray}\label{4.8}
0&\geq&\frac{1}{P_m}[\sum_l\kappa_l^{m-1}(f_{ll}-\sigma_k^{pp,qq}u_{ppl}u_{qql})
+2\dsum_{j\neq
i}\kappa_j^{m-2}\sigma_k^{jj,ii}u_{jji}^2\\&&+(m-1)\sigma_k^{ii}\dsum_j\kappa_j^{m-2}u_{jji}^2 +\sigma_k^{ii}\dsum_{p\neq
q}\dfrac{\kappa_p^{m-1}-\kappa_q^{m-1}}{\kappa_p-\kappa_q}u_{pqi}^2]\nonumber\\&&
-\dfrac{(m+\varepsilon_{\beta})\sigma_k^{ii}}{P_m^2}(\dsum_j
\kappa_j^{m-1}u_{jji})^2 +\dfrac{\beta
k}{u}-2\dfrac{\sigma_k^{ii}x_i^2}{\beta}+(n-k+1)\sigma_{k-1}.\nonumber
\end{eqnarray}

\par
Next we mainly deal with the third order derivative terms. We divide
into two case: $i\neq 1$ and $i=1$. By Lemma \ref{lemma8}, we have,
for sufficiently large $K$,
\begin{eqnarray}
0&\leq&\frac{1}{P_m}[\sum_{l=2}^n\kappa_l^{m-1}(K(\sigma_k)_l^2-\sigma_k^{pp,qq}u_{ppl}u_{qql})
+2\sum_{i=2}^n\sum_{j\neq
i}\kappa_j^{m-2}\sigma_k^{jj,ii}u_{jji}^2\\&&+(m-1)\sum_{i=2}^n\sigma_k^{ii}\sum_j\kappa_j^{m-2}u_{jji}^2
+2\sum_{i=2}^n\sigma_k^{ii}\sum_{j\neq
i}\frac{\kappa_j^{m-1}-\kappa_i^{m-1}}{\kappa_j-\kappa_i}u_{jji}^2]\nonumber\\&&
-\frac{m+1}{P_m^2}\sum_{i=2}^n\sigma_k^{ii}(\sum_j
\kappa_j^{m-1}u_{jji})^2 .\nonumber
\end{eqnarray}
Hence, \eqref{4.8} becomes,
\begin{eqnarray}\label{4.10}
0&\geq&\frac{1}{P_m}[\kappa_1^{m-1}(-C+K(\sigma_k)_1^2-\sigma_k^{pp,qq}u_{pp1}u_{qq1})
+2\sum_{j\neq
1}\kappa_j^{m-2}\sigma_k^{jj,11}u_{jj1}^2\\&&+(m-1)\sigma_k^{11}\dsum_j\kappa_j^{m-2}u_{jj1}^2
+2\sigma_k^{11}\sum_{j\neq
1}\dfrac{\kappa_j^{m-1}-\kappa_1^{m-1}}{\kappa_j-\kappa_1}u_{jj1}^2]\nonumber\\&&
-\dfrac{(m+\varepsilon_{\beta})\sigma_k^{11}}{P_m^2}(\dsum_j
\kappa_j^{m-1}u_{jj1})^2 +\dfrac{\beta
k}{u}-2\dfrac{\sigma_k^{ii}x_i^2}{\beta}+C_0\sigma_{k-1}.\nonumber
\end{eqnarray}
Now, we divide two sub-cases to continue. By Corollary \ref{cor4}, there exists some sequence $\{\delta_i\}_{i=1}^k$.

\noindent Case(A):  $\la_k\geq \delta_k\la_1$. Then, obviously we have,
$$
f=\sigma_k>\la_1\cdots\la_k\geq \delta_k^{k-1}\la_1^k,
$$
which implies $\la_1\leq C$. Hence, we have proved Lemma \ref{lemm10}.\\

\noindent Case(B): There exists some index $1\leq r\leq k-1$ such that, $$\la_{r}\geq\delta_r\la_1\text{ and } \la_{r+1}\leq \delta_{r+1}\la_1.$$ By Corollary \ref{cor4}, \eqref{4.10} becomes,
\begin{eqnarray}
0&\geq& \dfrac{\beta
k}{u}-2\dfrac{\sigma_k^{ii}x_i^2}{\beta}+C_0\sigma_{k-1}-C.\nonumber
\end{eqnarray}
We take $\beta$ sufficiently large, then, we have
$$C\geq\dfrac{\beta
k}{u}+\frac{C_0}{2}\sigma_{k-1}\geq \frac{\beta
k}{u}+c_0\sigma_1^{\frac{1}{k-1}}\sigma_k^{\frac{k-2}{k-1}},$$ where
we have used Newton-Maclaurin in the last inequality. Hence, we
obtain Lemma \ref{lemm10}.
\end{proof}
{\bf Proof of Theorem \ref{theo3}} The proof is classical \cite{TW}.
Suppose $u$ is an entrie solution of the equation \eqref{1.5}. For
arbitrary positive constant $R>1$, we consider the set
$$\Omega_R=\{y\in \mathbb{R}^n; u(Ry)\leq R^2\}.$$ Let
$$v(y)=\frac{u(Ry)-R^2}{R^2}.$$ We consider the following Dirichlet problem,
\begin{align}
\left\{\begin{matrix}\sigma_k[D^2v]&=&1 & {\rm in}~~ \Omega_R \label{4.11}\\
v&=&0   &{\rm on}~~ \pa\Omega_R\end{matrix}\right..
\end{align}
Using Lemma \ref{lemm10}, we have the following type estimates,
\begin{eqnarray}\label{4.12}
(-v)^{\beta}\Delta v\leq C.
\end{eqnarray}
 Here $\beta$ and $C$ depend on $k$, diameter of the $\Omega_R$. Now using the quadratic growth condition appears in Theorem \ref{theo3}, we have
$$c|Ry|^2-b\leq u(Ry)\leq R^2,$$ which implies $$|y|^2\leq \frac{1+b}{c}.$$ Thus $\Omega_R$ is bounded. Hence, the constant $C,\beta$ become two absolutely constants.
We now consider the domain $$\Omega'_{R}=\{y;u(Ry)\leq R^2/2\}\subset \Omega_R.$$ In $\Omega'_R$, we have,
$$v(y)\leq -\frac{1}{2}.$$ Hence, \eqref{4.12} implies that in $\Omega'_R$, we have,
$$\Delta v\leq 2^{\beta}C.$$ Note that, $$\nabla_y^2 v=\nabla^2_x u.$$ Thus, using previous two formulas, we have,  in $\Omega'_R=\{x;u(x)\leq R^2/2\}$,
\begin{eqnarray}
\Delta u\leq C,
\end{eqnarray}
where $C$ is a absolutely constant. Since $R$ is arbitrary, we have the above inequality in whole $\mathbb{R}^n$.  Using Evans-Krylov theory \cite{GT}, we have
$$|D^2u|_{C^{\alpha}(B_R)}\leq C\frac{|D^2u|_{C^0(B_R)}}{R^{\alpha}}\leq \frac{C}{R^{\alpha}}.$$ Hence, we obtain our theorem letting $R\rightarrow +\infty$.
\bigskip

\noindent {\it Acknowledgement:} The last two authors wish to thank  Professor Pengfei Guan for his valuable suggestions and comments. They also
 thank  the Shanghai Centre for Mathematical Sciences for their partial support.  The second author also would  like to thank Fudan University for their support and hospitality.

\end{document}